\documentclass[reqno, 11pt]{amsart}
\usepackage{amsfonts}
\usepackage{amsmath}
\usepackage{tikz-cd}
\usepackage{mathtools}
\usepackage{amssymb}
\usepackage{amsthm}
\usepackage{amscd}
\usepackage{color}
\usepackage{verbatim}

\usepackage[colorlinks]{hyperref}
\definecolor{linkblue}{RGB}{1,1,190}
\definecolor{citered}{RGB}{190,1,1}  
\hypersetup{
	linkcolor=linkblue,
	urlcolor=linkblue,
	citecolor=citered
}

\newtheorem{theorem}{Theorem}
\newtheorem{lemma}[theorem]{Lemma}

\newtheorem{proposition}[theorem]{Proposition}
\newtheorem{corollary}[theorem]{Corollary}

\theoremstyle{definition}
\newtheorem{example}[theorem]{Example}

\newcommand{\N}{\mathbb N}
\newcommand{\Z}{\mathbb Z}

 \DeclareMathOperator{\ord}{ord}
 
\DeclareMathOperator{\fin}{fin} \DeclareMathOperator{\rev}{rev}
 
\def\P{\ensuremath\mathcal{P}}
\def\Pf{\ensuremath\mathcal{P}_{\fin,1}}

\def\O{\ensuremath\mathcal{O}}

\title[The isomorphism problem for reduced finitary power monoids]{The isomorphism problem for reduced \\ finitary power monoids}
\author{Balint Rago}
\address{University of Graz, NAWI Graz, Department of Mathematics and Scientific Computing, Heinrichstraße 36,
8010 Graz, Austria}

\thanks{This work was supported by the FWF (projects W1230 and  10.55776/DOC-183-N)}

\email{balint.rago@uni-graz.at}

\subjclass[2020]{20M14}

\keywords{isomorphism problem, power monoid}

\begin{document}

\maketitle

\begin{abstract}
    Let $H$ be a multiplicatively written monoid with identity $1_H$ and let $\Pf(H)$ denote the reduced finitary power monoid of $H$, that is, the monoid consisting of all finite subsets of $H$ containing $1_H$ with set multiplication as operation. Building on work of Tringali and Yan, we give a complete description of pairs of commutative and cancellative monoids $H,K$ for which $\Pf(H)$ and $\Pf(K)$ are isomorphic.
\end{abstract}

\section{Introduction}

Let $S$ be a multiplicatively written semigroup. We denote by $\P(S)$ the \textit{large power semigroup} of $S$, i.e.\ the family of all non-empty subsets of $S$, endowed with the binary operation of setwise multiplication \[(X,Y)\mapsto \{x\cdot y:x\in X,y\in Y\}.\] Moreover, for a monoid $H$ with identity $1_H$, we denote by $\Pf(H)$ the set of finite subsets of $H$ that contain the identity element of $H$; it is a submonoid of $\P(H)$ with identity $\{1_H\}$, called the \textit{reduced finitary power monoid} of $H$. 

The systematic study of power semigroups began in the 1960s and was initiated by Tamura and Shafer.\ A central question that arose from their work, called the \emph{isomorphism problem for power semigroups}, is whether, for semigroups $S$ and $T$ in a certain class $\O$, an isomorphism between $\P(S)$ and $\P(T)$ implies that $S$ and $T$ are isomorphic. Although this was answered in the negative by Mogiljanskaja \cite{Mo73} for the class of all semigroups, other classes have been found for which the answer is positive, such as the class of groups \cite{Sh67}, Clifford semigroups \cite{Ga-Zh14} and several others \cite{Go-Is84,Sh-Ta67,Tr25}. Recently, the investigation of the arithmetic of reduced finitary power monoids was set in motion \cite{An-Tr21,Bi-Ge25,Co-Tr25,Fa-Tr18,Tr22,Re26a}, motivated by classical questions in additive number theory, such as Sárközy's conjecture \cite[Conjecture 1.6]{Sa12} and Ostmann's conjecture \cite[p.13]{Os68}. 
For recent work on the structure of automorphisms of power monoids, see \cite{2Ra25,Tr-We25,Tr-Ya25}. \\

In \cite{Bi-Ge25}, the authors conjectured that the isomorphism problem for reduced finitary power monoids has a positive answer for the class of numerical monoids. In other words, whenever $H$ and $K$ are (additive) submonoids of $\N_0$, then $\P_{\fin,0}(H)$ is isomorphic to $\P_{\fin,0}(K)$ if and only if $H$ and $K$ are isomorphic. It is straightforward to verify that for any two monoids $H$ and $K$ and an isomorphism \[f:H\to K,\] the map \[F:\Pf(H)\to\Pf(K),\]  \[F(X)=\{f(x):x\in X\}\] itself is an isomorphism, whence only the ``only if" part of the statement is of interest.\\

The aforementioned conjecture was soon resolved by Tringali and Yan \cite{2Tr-Ya25}, who proved that the isomorphism problem has a positive answer for the class of rational Puiseux monoids, i.e. the class of additive submonoids of $\mathbb{Q}_{\geq 0}$. Moreover, they provided several examples of non-isomorphic monoids with isomorphic reduced finitary power monoids \cite[Examples 1.2]{2Tr-Ya25}. While these examples all involve non-cancellative monoids, another such example was given in \cite{Ra25}, giving a ``strongly negative" answer to the isomorphism problem for the class of commutative valuation monoids. 
To be more precise, it is shown that for two non-isomorphic reduced valuation monoids $H$ and $K$ with isomorphic quotient groups, there is always an isomorphism $\Pf(H)\to\Pf(K)$. Further progress has been achieved in \cite{3Tr-Ya25}, where an affirmative answer is given for the class of torsion groups. This raises the question, in what way power monoids of valuation monoids have more flexibility than those of other ``well-behaved" classes of monoids.  \\

In this paper, we give a complete classification of the isomorphism problem for reduced finitary power monoids for the class of commutative, cancellative monoids. This is achieved in two steps. First, we provide a positive answer to the isomorphism problem for the class of commutative, cancellative monoids with non-trivial unit group (Theorem \ref{nonreducedpositive}). Thereafter, we investigate reduced monoids and generalize the result obtained in \cite{Ra25} to non-valuation monoids. In equivalent terms, our main result (Theorem \ref{main}) states that for two commutative, cancellative non-isomorphic monoids $H$ and $K$, $\Pf(H)\simeq\Pf(K)$ if and only if $H$ and $K$ are both reduced and $H$ differs from $K$ (up to isomorphism) only by a certain ``deformation" of a specifically defined valuation submonoid of $H$. Moreover, we provide an example of two such monoids, which are not valuation monoids. \\

In the next section, we gather some preliminaries and recall some results from \cite{3Tr-Ya25}. There, given two monoids $H$ and $K$ and an isomorphism $f:\Pf(H)\to\Pf(K)$, the authors prove the existence of a bijective map $g:H\to K$, which is closely related to $f$ and serves as a remarkably helpful tool to establish a strong relationship between the power monoids and their respective base monoids.

\section{Preliminaries and the pullback of an isomorphism}

We denote by $\N$ the set of positive integers and by $\N_0$ the set of non-negative integers. Let $G$ be an abelian group, $X\subseteq G$ a subset and $a\in G$. We write \[aX:=\{ax:x\in X\}\] and \[X^{-1}:=\{x^{-1}:x\in X\}.\]  

We say that a commutative monoid $H$ is \textit{cancellative} if for all $a,b,c\in H$ with $ac=bc$, it holds that $a=b$. For a commutative, cancellative monoid $H$, we denote by $\mathsf{q}(H)$ the \textit{quotient group} of $H$, that is, the unique abelian group up to isomorphism with the property that any abelian group containing an isomorphic image of $H$, also contains an isomorphic image of $\mathsf{q}(H)$. We have \[\mathsf{q}(H)=\{ab^{-1}:a,b\in H\}\] and throughout this paper, we will always assume that $H\subseteq \mathsf{q}(H)$.
We say that a commutative, cancellative monoid $H$ is a \textit{valuation monoid} if for every $x\in \mathsf{q}(H)$, we have $x\in H$ or $x^{-1}\in H$, or, equivalently, if for every $x,y\in H$, $xy^{-1}\in H$ or $yx^{-1}\in H$. 
Moreover, we denote by $H^\times$ the group of units (invertible elements) of $H$ and we call $H$ \textit{reduced} if $H^\times=\{1_H\}$. We call a non-invertible element $a\in H$ \textit{irreducible} if it cannot be written as a non-trivial product of non-invertible elements of $H$ and we denote by $\ord(a)$ the \textit{order} of an element $a\in H$, that is, the minimal positive integer $n$ such that $a^n=1_H$. If no such integer exists, then we say that $a$ has infinite order. Lastly, we say that two elements $a,b\in H$ are \textit{independent}, if for all $n,m\in\Z$, $a^n=b^m$ implies $n=0=m$, or equivalently, if the subgroup of $\mathsf{q}(H)$ generated by $a$ and $b$ is free abelian of rank 2. It is evident that two elements are independent only if they have infinite order.

Let $H$ be a monoid with identity $1_H$; $X,Y\in\Pf(H)$ and $n\in\mathbb{N}$. We denote by $X^n$ the $n$-fold product of $X$ in $\Pf(H)$, that is, \[X^n=\underbrace{X\cdot\ldots\cdot X}_{n \text{ times}}.\] We say that $X$ divides $Y$, or that $Y$ is divisible by $X$ in $\Pf(H)$ if $X\cdot Z=Y$ for some $Z\in\Pf(H)$. Note that, since $1_H\in Z$ for every $Z\in \Pf(H)$, $Y$ is divisible by $X$ only if $X\subseteq Y$.\\

Let $H$ and $K$ be monoids. In \cite{3Tr-Ya25}, Tringali and Yan proved that every isomorphism $f:\Pf(H)\to\Pf(K)$ exhibits a strong property, which we will use extensively in this paper.

\begin{theorem}[{{\cite[Theorem 3.2]{3Tr-Ya25}}}]
    Let $H,K$ be monoids and $f:\Pf(H)\to\Pf(K)$ an isomorphism. Then $f(X)$ is a 2-element set for every 2-element set $X\in\Pf(H)$.
\end{theorem}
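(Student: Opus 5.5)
The plan is to find a purely algebraic property of an element $X\in\Pf(H)$, stated only in terms of the monoid operation of $\Pf(H)$, that holds exactly when $|X|=2$. Such a property is transported by any isomorphism $f$, which gives the claim.

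\emph{Step 1 (tools that survive $f$).} First I record facts visible to any isomorphism.
\begin{itemize}
\item The identity of $\Pf(H)$ is $\{1_H\}$, and it is the only unit: if $XY=\{1_H\}$, then $X\cup Y\subseteq XY$ forces $X=Y=\{1_H\}$.
\item Divisibility ($X\mid Y$ iff $XZ=Y$ for some $Z$) is preserved by $f$. As noted in the preliminaries, $X\mid Y$ implies $X\subseteq Y$, so divisibility is antisymmetric.
\end{itemize}
Call $X\neq\{1_H\}$ a \emph{quark} if its only divisors are $\{1_H\}$ and $X$. Quarks are mapped to quarks by $f$. Every 2-element set $\{1_H,a\}$ is a quark, because its divisors are subsets containing $1_H$.

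\emph{Step 2 (separating 2-element sets from larger quarks).} Larger quarks can exist, for instance $\{1_H,a,b\}$ in a free monoid. So I need a second, finer invariant, and I will use growth under powers. For $X=\{1_H,a\}$ we have $X^n=\{1_H,a,\dots,a^n\}$. Hence the chain $X\mid X^2\mid X^3\mid\cdots$ has the following feature: the divisors of $X^n$ are totally ordered by divisibility, or at least they are all powers of $X$ apart from degenerate cases. For a quark $Y$ with $|Y|\ge 3$, the plan is to exhibit a divisor of $Y^n$ that is not comparable with the powers of $Y$.
\begin{itemize}
\item If $a,b\in Y\setminus\{1_H\}$, then $\{1_H,a\}$ and $\{1_H,b\}$ are quarks contained in $Y^n$.
\item The aim is to show that, for suitable $n$, some 2-element set $\{1_H,c\}$ with $c\in Y$ divides $Y^n$. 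A natural candidate is $Y^n=\{1_H,c\}\cdot Y^n$, which holds whenever $cY^n\subseteq Y^n$, i.e.\ once the powers of $Y$ stabilise. Otherwise I would look for a product of 2-element sets that reproduces $Y^n$.
\end{itemize}
This yields the characterization: \emph{$X$ is a 2-element set iff $X$ is a quark and every quark dividing some power $X^n$ is a power-comparable element of the chain $\{X^k\}$.} This property transfers through $f$, because $f(X^n)=f(X)^n$.

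\emph{Step 3 (degenerate cases).} Elements with $a^2\in\{1_H,a\}$ (idempotents and involutions) make $\{1_H,a\}$ itself idempotent, $X^2=X$. These need separate treatment. There I would characterize $X$ as an idempotent quark all of whose nontrivial divisors coincide with $X$. I would then check that a 3-or-more-element idempotent quark must admit a 2-element idempotent or quark divisor that breaks this property.

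\emph{Main obstacle.} Step 2 is the crux: proving that every quark with at least three elements has some power with an "extra" divisor, uniformly over arbitrary (non-commutative, non-cancellative) monoids. If the divisor-of-powers invariant proves too weak, my fallback is a counting invariant. I would compare the number of quarks dividing $X^n$ as $n$ grows. For $|X|=2$ this stays small (essentially the 2-element subsets $\{1_H,a^k\}$ that divide), whereas a larger quark should produce strictly more. Both the divisor-of-powers property and this count are isomorphism-invariant.
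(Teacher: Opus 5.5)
The paper does not prove this statement; it quotes it from \cite[Theorem 3.2]{3Tr-Ya25}. Your argument therefore has to stand on its own, and it has a genuine gap.

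Step~1 is correct: $\{1_H\}$ is the only unit, divisibility implies containment, $f$ maps quarks to quarks, and 2-element sets are quarks. The idempotent case of Step~3 can also be closed. If $X^2=X$ and $x\in X\setminus\{1_H\}$, then $\{1_H,x\}\cdot X=X$, so an idempotent quark has exactly two elements, and $f$ preserves both idempotency and being a quark.

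All remaining cases rest on the characterization in Step~2, and it fails for genuine 2-element sets, even in the best-behaved setting. Take $H=(\N_0,+)$, written additively, and $X=\{0,1\}$.
\begin{itemize}
\item $X^3=\{0,1,2,3\}=\{0,2\}+\{0,1\}$, so the quark $\{0,2\}$ divides $X^3$. It is not a power of $X$, and it is divisibility-comparable with neither $X$ nor $X^2$. In particular, the divisors of $X^3$ are not totally ordered.
\item $X^4=\{0,1,3\}+\{0,1\}$, where $\{0,1,3\}$ is a 3-element quark.
\end{itemize}
So the ``extra'' divisors of powers that you want to exhibit for larger quarks exist just as well for 2-element sets, and your property does not single out $|X|=2$. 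Torsion produces the same phenomenon. If $a$ has finite order $m\ge 3$ in a group, then $\{1_H,a\}^{m-1}=\langle a\rangle$ is divisible by the quark $\{1_H,a^2\}$. This is exactly your stabilisation trick $\{1_H,c\}\cdot Y^n=Y^n$, applied to a 2-element set.

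The fallback does not repair this. For $X=\{0,1\}$, the quarks dividing $X^n=\{0,1,\dots,n\}$ include every $\{0,k\}$ with $2k\le n+1$. They also include larger quarks such as $\{0,1,3\}$ once $n\ge 4$. So their number is not small, and you give no argument that a larger quark produces more.

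More fundamentally, you only learn that $X\in\Pf(H)$ and $f(X)\in\Pf(K)$ share the invariant, with $H$ and $K$ arbitrary. The invariant must therefore determine $|X|$ uniformly over all monoids. Divisor data of powers is too coarse for this. In $\P_{\fin,0}(\N_0)$, the divisors of $X^2$ are just $\{0\}$, $X$ and $X^2$ both for $X=\{0,1\}$ and for the quark $X=\{0,1,4\}$, where $X^2=\{0,1,2,4,5,8\}$.

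What is missing is exactly the content of the cited theorem. You need a property definable from the operation of $\Pf(H)$ that separates 2-element sets from all larger quarks in arbitrary monoids, including non-commutative, non-cancellative and torsion ones. Your Step~2, which you yourself flag as the crux, is not merely unproved but false as formulated. So the proposal does not establish the statement.
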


By applying the theorem to the inverse of $f$, it follows that $f$ bijectively maps 2-element sets to 2-element sets. This allows us to define an auxiliary map $g:H\to K$ in the following way. We set $g(1_H)=1_K$ and for any $a\in H\setminus \{1_H\}$, $g(a)$ is defined via $f(\{1_H,a\})=\{1_K,g(a)\}$. This function is clearly a well-defined bijection and is called the \textit{pullback} of the automorphism $f$. We now present another result of Tringali and Yan, concerning a basic, yet crucial property of the pullback. 

\begin{proposition} [{{\cite[Proposition 4.3]{3Tr-Ya25}}}] \label{Tringaliprop}
    Let $H,K$ be cancellative monoids and $f:\Pf(H)\to\Pf(K)$ an isomorphism with pullback $g$. Then $g(a^n)=g(a)^n$ for all $a\in H$ and $n\in\N_0$.
\end{proposition}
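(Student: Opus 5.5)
The plan is to prove the identity by strong induction on $n$, exploiting the fact that $f$ is a monoid isomorphism and therefore carries powers of the 2-element set $\{1_H,a\}$ to powers of $\{1_K,g(a)\}$. The cases $n=0,1$ hold by definition of $g$, and if $a=1_H$ everything is trivial, so fix $a\neq 1_H$ and write $b:=g(a)$. We have $f(\{1_H,a\})=\{1_K,b\}$, hence for every $k$
\[ f\bigl(\{1_H,a,\dots,a^k\}\bigr)=f(\{1_H,a\}^k)=\{1_K,b\}^k=\{1_K,b,\dots,b^k\}. \]
The key identity in $\Pf(H)$ will be
\[ \{1_H,a^n\}\cdot\{1_H,a\}^{n-1}=\{1_H,a,\dots,a^{2n-1}\}=\{1_H,a\}^{2n-1}. \]
If $a^n=1_H$ the claim reduces to showing $b^n=1_K$, which I treat separately below. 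Otherwise, applying $f$ and writing $c:=g(a^n)$ gives
\[ \{1_K,c\}\cdot\{1_K,b,\dots,b^{n-1}\}=\{1_K,b,\dots,b^{2n-1}\}. \]
In particular $c\in\{b,\dots,b^{2n-1}\}$ (or $c=1_K$, which is excluded by injectivity of $g$).

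Next I pin down $c$. By the induction hypothesis $g(a^j)=b^j$ for $j<n$, and $g$ is injective. Hence $c\neq b^j$ whenever $a^j\neq a^n$, and if $a^j=a^n$ for some $j<n$ then $c=g(a^j)=b^j=b^n$ is already what we want. So assume $a^n\notin\{1_H,a,\dots,a^{n-1}\}$. If $b$ has infinite order (or large enough order), the elements $1_K,\dots,b^{2n-1}$ are distinct. Then the new element $b^n$ on the right must equal $c\,b^{j}$ for some $j\le n-1$, which gives $c=b^{n-j}$ with $n-j\le n$. Together with $c\neq b^{i}$ for $i<n$ (from injectivity), this forces $c=b^n$.

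The main obstacle is the torsion situation, where $b$ (or $a$) has small order and the powers $b^i$ on the right collide, so the counting above breaks down. Here I would first show that $a$ and $b$ have the same order. The sequence $\{1_H,a\}^k$ is strictly increasing until it stabilises, exactly at $k=\ord(a)-1$ when the order is finite, and stabilisation ($X^k=X^{k+1}$) is preserved by the isomorphism $f$. Hence $\ord(a)=\ord(b)$, and $|\{1_H,\dots,a^k\}|=|\{1_K,\dots,b^k\}|$ for all $k$.

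With equal orders $m$ and $n<m$, the elements $a^0,\dots,a^{n}$ are distinct and so are $b^0,\dots,b^n$. The map $g$ sends $\{1_H,\dots,a^{n-1}\}$ onto $\{1_K,\dots,b^{n-1}\}$ by induction. Also, $g(a^n)$ lies in $\langle b\rangle=\{1_K,\dots,b^{2n-1}\}$ by the displayed identity. I would then refine the divisibility argument, using $\{1_H,a^n\}\cdot\{1_H,a\}^{j}=\{1_H,a\}^{n+j}$ for $j=n-1$ and also smaller $j$, to force $c=b^n$.

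For $n\ge m$, I would reduce $n$ modulo $m$ and use $a^m=1_H$, $b^m=1_K$. Cancellativity is used throughout to guarantee that distinct exponents below the order give distinct elements, and that $c\,b^j=b^{i}$ determines $c$.
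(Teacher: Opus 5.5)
Your overall strategy is sound. Proving $\ord(a)=\ord(b)$ via the first index $k$ with $\{1_H,a\}^k=\{1_H,a\}^{k+1}$ is correct, and so is the induction via $\{1_H,a^n\}\cdot\{1_H,a\}^{n-1}=\{1_H,a\}^{2n-1}$. (The paper only cites this result from Tringali--Yan, so there is no in-paper proof to compare against.)

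The gap is in the case you single out as the main obstacle, $n<\ord(a)=\ord(b)\le 2n-1$: the proposal never actually handles it. You only say you ``would refine the divisibility argument''. The refinement you name uses $\{1_H,a^n\}\cdot\{1_H,a\}^{j}=\{1_H,a\}^{n+j}$ for $j<n-1$, and this identity is false precisely in that situation. The left side is $\{1_H,\dots,a^j\}\cup\{a^n,\dots,a^{n+j}\}$, which does not contain $a^{j+1}$ when $\ord(a)>n$.

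A smaller ordering issue: your early claim that $a^j=a^n$ with $j<n$ gives $b^j=b^n$ is only justified once $\ord(a)=\ord(b)$ is known. So the order argument has to come first.

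The missing observation is that no case distinction is needed. Your infinite-order argument never uses that all of $1_K,b,\dots,b^{2n-1}$ are distinct. It only uses $b^n\notin\{1_K,b,\dots,b^{n-1}\}$, and this follows from $\ord(b)=\ord(a)>n$ in the remaining case $a^n\notin\{1_H,\dots,a^{n-1}\}$.

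Then $b^n\in\{1_K,b\}^{2n-1}=\{1_K,c\}\cdot\{1_K,b,\dots,b^{n-1}\}$ forces $b^n=cb^j$ for some $0\le j\le n-1$. By cancellation $c=b^{n-j}$. Since $g(a^k)=b^k$ and $a^k\neq a^n$ for all $k<n$, injectivity of $g$ excludes $n-j<n$. Hence $c=b^n$ uniformly for finite and infinite order, which closes the proof.
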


As an immediate corollary of this proposition, we obtain that the pullback preserves the order of an element. Indeed, if $a\in H$, then $a^n=1_H$ if and only if $g(a^n)=g(1_H),$ which is equivalent to $g(a)^n=1_K$. Hence $\ord(a)=\ord(g(a))$ for all $a\in H$. We now add to the previous result by showing that the pullback preserves negative powers as well. 

\begin{lemma} \label{pullbackunit}
    Let $H,K$ be commutative and cancellative monoids and $f:\Pf(H)\to\Pf(K)$ an isomorphism with pullback $g$. If $a\in H^\times$, then $g(a)\in K^\times$ and $g(a^{-1})=g(a)^{-1}$. In particular, $g(a^n)=g(a)^n$ for all $n\in\Z$ and $g(H^\times)=K^\times$. 
\end{lemma}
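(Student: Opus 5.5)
The plan is to show $g(a)\,g(a^{-1})=1_K$ for every $a\in H^\times$. Since $a^{-1}\in H$, the element $g(a^{-1})$ is defined, so this gives $g(a)\in K^\times$ with $g(a)^{-1}=g(a^{-1})$. Combined with Proposition \ref{Tringaliprop} applied to $a^{-1}$, it yields $g(a^{-n})=g((a^{-1})^n)=g(a^{-1})^n=g(a)^{-n}$, hence $g(a^n)=g(a)^n$ for all $n\in\Z$. Applying the same argument to $f^{-1}$, whose pullback is $g^{-1}$, gives $g^{-1}(K^\times)\subseteq H^\times$, so $g(H^\times)=K^\times$.

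If $a=1_H$ there is nothing to show. If $a^2=1_H$, then $a^{-1}=a$ and $g(a)^2=g(a^2)=1_K$ by Proposition \ref{Tringaliprop}, so we are done. Hence assume $a\neq a^{-1}$, and write $x=g(a)$ and $c=g(a^{-1})$, which are distinct and different from $1_K$ because $g$ is a bijection. Since $f$ is a homomorphism,
\[ f(\{1_H,a\}^n\{1_H,a^{-1}\}^n)=\{1_K,x\}^n\{1_K,c\}^n \quad\text{for all } n\in\N. \]
The left-hand argument equals $\{a^{-n},\dots,a^{n}\}$, which has at most $2n+1$ elements. If $xc\neq 1_K$, cancellativity should make the right-hand side grow faster, roughly like $(n+1)^2$ when $x,c$ are independent, and still strictly faster than $2n+1$ in the torsion-related cases. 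Since $|a\cdot a^{-1}|$ collapses to $1_H$, and this collapse is the only coincidence distinguishing units, I want to detect it through such a growth invariant.

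The main obstacle is that $f$ is not a priori known to preserve cardinalities beyond 2-element sets, so "size" must be replaced by a purely multiplicative invariant of $\Pf$. My first attempt would define $d(Y)$ as the number of 2-element sets $\{1,y\}$ dividing $Y$ in $\Pf$; this is preserved by $f$, since $f$ bijects 2-element sets and preserves divisibility. I would then compute $d$ for $Y=\{1,a\}^n\{1,a^{-1}\}^n$ and for $\{1,x\}^n\{1,c\}^n$ with $xc\neq1_K$. For $Y=\{a^{-n},\dots,a^n\}$, a divisor $\{1,a^k\}$ occurs for all $1\le|k|\le 2n$ (using $Y=\{1,a^k\}\cdot\{a^{-n},\dots,a^{n-k}\}$ appropriately, since the cofactor must contain $1_H$). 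This gives about $4n$ divisors. In the non-unit case, a short case analysis on the cancellative product $\{1,x\}^n\{1,c\}^n$ should give a different count (for instance, $\{1,x^{k}\}$ with $k>n$ fails to divide). Should this invariant not separate the cases cleanly, the fallback is to find the relation directly: locate a product identity among 2-element sets in $\Pf(H)$ that holds only because $a\cdot a^{-1}=1_H$, and pull it through $f$ to force $xc=1_K$.
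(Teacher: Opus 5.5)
There is a genuine gap. The step that is supposed to force $xc=1_K$ is never carried out, and the part that is written down is incorrect. A divisor of $Y$ in $\Pf(H)$ must be a subset of $Y$, because the cofactor contains $1_H$. So for $a$ of infinite order, the sets $\{1_H,a^k\}$ dividing $Y=\{a^{-n},\dots,a^n\}$ are exactly those with $1\le |k|\le n$. Your cofactor $\{a^{-n},\dots,a^{n-k}\}$ no longer contains $1_H$ once $k>n$.

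Hence $d(Y)=2n$, not about $4n$. The feature you propose as distinguishing, that $\{1_K,x^k\}$ with $k>n$ fails to divide, occurs equally on the unit side. Once computed correctly, $d$ may separate some cases; for independent $x,c$ one gets $2\lfloor (n+1)/2\rfloor$. But you would still need a complete case analysis over every way $x$ and $c=g(a^{-1})$ can fail to satisfy $xc=1_K$ in an arbitrary commutative cancellative monoid: relations $x^p=c^q$ of either sign, torsion in $\mathsf{q}(K)$, and so on. None of this is done.

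The growth heuristic for $|\{1_K,x\}^n\{1_K,c\}^n|$ cannot be used, since you do not know that $f$ preserves cardinality. Your ``fallback'' names the right strategy but supplies no identity, and that identity is the entire content of the proof.

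The missing idea is a single small identity. Since $a\cdot a^{-1}=1_H$,
\[\{1_H,a\}^2\cdot\{1_H,a^{-1}\}=\{1_H,a,a^2,a^{-1}\}=\{1_H,a^2\}\cdot\{1_H,a^{-1}\}.\]
Apply $f$ and use $g(a^2)=x^2$ from Proposition \ref{Tringaliprop}, with $y:=g(a^{-1})$. This gives $\{1_K,x\}^2\{1_K,y\}=\{1_K,x^2\}\{1_K,y\}$. So $\{1_K,x\}$ divides the right-hand side, and therefore $x\in\{1_K,x^2,y,x^2y\}$.

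Now $x\neq 1_K$, and $x\neq y$ by bijectivity of $g$ since $a\neq a^{-1}$. Also $x\neq x^2$, by cancellativity or by infinite order. Hence $x=x^2y$, i.e.\ $y=x^{-1}$.

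The paper handles finite order $m$ separately via $g(a^{-1})=g(a^{m-1})=x^{m-1}=x^{-1}$, but the identity argument works whenever $a^2\neq 1_H$. Your remaining reductions are fine: negative powers via Proposition \ref{Tringaliprop} applied to $a^{-1}$, and $g(H^\times)=K^\times$ via $f^{-1}$ and its pullback $g^{-1}$.
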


\begin{proof}
    Let $a\in H^\times$ and set $x:=g(a)$ and $y:=g(a^{-1})$. If $a=1_H$, the assertion follows by definition of $g$, whence we can disregard this case. Moreover, if $a$ is an element of finite order $n$, then \[y=g(a^{n-1})=x^{n-1}=x^{-1}\] by Proposition \ref{Tringaliprop} and the observation that $\ord(a)=\ord(x)$. 
    Thus, we can assume that $a$ has infinite order. Note that the equation \[\{1_H,a\}^2\cdot \{1_H,a^{-1}\}=\{1_H,a,a^2,a^{-1}\}=\{1_H,a^2\}\cdot\{1_H,a^{-1}\}\] and Proposition \ref{Tringaliprop} imply that \[\{1_K,x\}^2\cdot \{1_K,y\}=\{1_K,x^2\}\cdot\{1_K,y\}.\] Then, since $\{1_K,x\}$ divides the right hand site of the equation, we obtain \[x\in \{1_K,x^2\}\cdot\{1_K,y\}=\{1_K,x^2,y,x^2y\}.\] By the bijectivity of $g$ and the fact that $1_H\neq a\neq a^{-1}$, both $x=1_K$ and $x=y$ are impossible. Moreover, since $a$ has infinite order, so does $x$, whence $x=x^2$ is ruled out as well. Consequently, $x=x^2y$ and $y=x^{-1}$, as desired. Hence $x$ is a unit of $K$, $g(H^\times)\subseteq K^\times$ and a symmetric argument involving $f^{-1}$ yields $g(H^\times)=K^\times$. Moreover, by Proposition \ref{Tringaliprop}, we obtain $g(a^n)=x^n$ for all $n\in \Z$.
\end{proof}

\section{Is the pullback an isomorphism?}

In this section, we give a positive answer to the isomorphism problem for reduced finitary power monoids for the class of commutative, cancellative monoids with non-trivial unit group (Theorem \ref{nonreducedpositive}). This is achieved via a straightforward approach: We prove that the pullback $g$ of an isomorphism $f:\Pf(H)\to \Pf(K)$ for two such monoids $H$ and $K$ is itself an isomorphism between $H$ and $K$, i.e., that $g(ab)=g(a)g(b)$ for all elements $a,b\in H$. 
To do this, we work in the general setting of commutative, cancellative monoids and we consider several different cases, depending on the order of $a$ and $b$ and whether they are independent elements or not. If, for example, $a$ and $b$ both have finite order, then it already follows from \cite[Proposition 4.7]{3Tr-Ya25} that the pullback preserves their product, whence we can turn our attention to monoids, which contain at least one element of infinite order. It will turn out that the class of reduced monoids is more complicated to handle. We will deal with them in the next section. \\

We further mention that whenever $a=b^k$ or $b=a^k$ for some $k\in\Z$, then it follows from Proposition \ref{Tringaliprop} and Lemma \ref{pullbackunit} that $g(ab)=g(a)g(b)$, whence we can disregard this case. Moreover, for the sake of readability, we will use the results obtained from Proposition \ref{Tringaliprop} and Lemma \ref{pullbackunit} freely and without further mention throughout the rest of this paper.

\begin{center}
    \textit{Throughout the rest of this paper, a monoid is always commutative, cancellative and contains at least one element of infinite order.}
\end{center}

Let $H$ be a monoid and let $X\in\Pf(H)$. We call an element $a\in H$, $a\neq 1_H$ a \textit{quotient} of $X$ if there exists $b\in X$ such that $ab\in X$. Moreover, we say that $a$ is a \textit{quotient of} $X$ \textit{of multiplicity} $n$, where $n$ is the number of elements $b\in X$ such that $ab\in X$. Clearly, since $1_H\in X$, every $a\in X\setminus \{1_H\}$ is itself a quotient of $X$. \\

We proceed with an important property of isomorphisms between power monoids.

\begin{lemma} \label{card}
    Let $H$ and $K$ be monoids. Then every isomorphism $f:\Pf(H)\to\Pf(K)$ is cardinality-preserving.
\end{lemma}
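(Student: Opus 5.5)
Let $f:\Pf(H)\to\Pf(K)$ be an isomorphism; we show $|f(X)|=|X|$ for every $X\in\Pf(H)$. We will not attack this by a quotient computation. Instead we characterize the cardinality of $X$ purely in terms of the monoid structure of $\Pf(H)$, using the length of chains of divisors. Recall that $X\mid Y$ in $\Pf(H)$ forces $X\subseteq Y$. For $X\in\Pf(H)$, consider strictly increasing chains
\[\{1_H\}=X_0\subsetneq X_1\subsetneq\cdots\subsetneq X_m=X,\]
where each $X_i$ divides $X_{i+1}$ in $\Pf(H)$. The goal is to show that the maximal length $m$ of such a chain equals $|X|-1$.

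The upper bound is immediate. Divisibility implies inclusion, so a strictly increasing chain ending at $X$ adds at least one element at each step, which gives $m\le |X|-1$. For the lower bound we need, for every $Y\in\Pf(H)$ with $|Y|\ge2$, some proper divisor $Z$ of $Y$ with $|Z|=|Y|-1$ and $Z\ne\{1_H\}$ when $|Y|>2$. Then we can descend one element at a time.

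Here is the natural candidate. Pick $y\in Y\setminus\{1_H\}$ and put $Z=Y\setminus\{y\}$. Then $Z\cdot\{1_H,y\}=Z\cup yZ$, which contains $Y$ but may be too large. A more robust device is the following. Since $Y=Y\cdot\{1_H\}$, every $Z$ satisfies $Z\mid Z\cdot W$, so we would rather find $Z\subsetneq Y$ and $W$ with $ZW=Y$. A cleaner route is to drop divisibility chains of $X$ and use chains of divisors in the opposite direction, that is, chains of multiples. But multiples are infinite in number, so this does not help directly.

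We therefore switch to a characterization that is plainly preserved. Let $\mathcal{A}_1=\{\{1_H\}\}$, the identity of $\Pf(H)$. The 2-element sets are preserved by the Tringali–Yan theorem. We proceed by induction on $n=|X|$. Assume $f$ preserves cardinality for all sets of size less than $n$, and also that $f^{-1}$ does. Then $f$ maps the family of sets of size at least $n$ bijectively onto the corresponding family in $K$. It then suffices to show that the sets of size exactly $n$ can be singled out among those of size at least $n$ by an $f$-invariant property. One candidate property is that $X$ has only finitely many divisors. Another is the number of distinct divisors $D$ of $X$ with $|D|=2$, since $\{1_H,a\}\mid X$ holds iff $X=X\cup aX'$ for a suitable $X'$. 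A third candidate is the property $X^k=X^{k}\cdot\{1,a\}$.

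We expect the main obstacle to be exactly this step: isolating an intrinsic invariant that pins down $|X|$. What we would try first is the count of 2-element divisors together with an element-of-infinite-order argument. Concretely, take $a\in H$ of infinite order and consider $X\cdot\{1_H,a^N\}$ for $N$ large. Its cardinality is exactly $2|X|$, because $X$ and $a^NX$ are disjoint for large $N$ by cancellativity and the infinite order of $a$. Then compare how many 2-element sets divide such products. We hope this yields a formula in $|X|$ that is invariant under $f$, which would finish the induction.
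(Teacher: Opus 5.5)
Your proposal leaves the decisive step unproved. You never exhibit an $f$-invariant quantity that determines $|X|$, and none of the candidates you list can serve.
\begin{itemize}
\item The divisor-chain idea fails because $\Pf(H)$ has atoms of arbitrary size. For $a\in H$ of infinite order, neither $\{1_H,a\}$ nor $\{1_H,a^3\}$ divides $\{1_H,a,a^3\}$. So the longest chain ending at $\{1_H,a,a^3\}$ has length $1$, not $2$.
\item ``Finitely many divisors'' holds for every element of $\Pf(H)$, since divisors are subsets, so it separates nothing.
\item The number of $2$-element divisors, even of $X\cdot\{1_H,a^N\}$ with $N$ large, is not a function of $|X|$. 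For $X=\{1_H,a,a^2\}$ both $\{1_H,a\}$ and $\{1_H,a^N\}$ divide $X\cdot\{1_H,a^N\}$. For $X=\{1_H,a,a^3\}$ only $\{1_H,a^N\}$ does.
\end{itemize}
So the induction you set up has nothing that actually pins down sets of size exactly $n$.

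The missing idea is to count \emph{all} solutions of a suitable equation, not $2$-element divisors. For $n$ large, the translates $a^{sn}X$ with $s\ge 0$ are pairwise disjoint. Then $\{1_H,a^n\}\cdot Y=\{1_H,a^n\}^3\cdot X$ holds if and only if $Y=X\cup a^{2n}X\cup Z$ with $Z\subseteq a^nX$ arbitrary, so there are exactly $2^{|X|}$ solutions. By Proposition~\ref{Tringaliprop}, $f(\{1_H,a^n\})=\{1_K,g(a)^n\}$, and $g(a)$ again has infinite order. Hence the transported equation $\{1_K,g(a)^n\}\cdot Y=\{1_K,g(a)^n\}^3\cdot f(X)$ has the same number of solutions. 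By the same computation in $K$, that number is $2^{|f(X)|}$ for $n$ large, so $|f(X)|=|X|$ directly, with no induction. Your sketch never uses the two pullback facts $g(a^n)=g(a)^n$ and preservation of infinite order. Without them, an infinite-order element of $H$ gives you no handle on the $K$ side.
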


\begin{proof}
    Let $f:\Pf(H)\to\Pf(K)$ be an isomorphism with pullback $g$, $X\in\Pf(H)$, $a\in H$ an element of infinite order and $x:=g(a)$. Since $H$ is cancellative, we find $N\in\N$ such that $a^nX\cap X=\emptyset$ for every $n\geq N$. Moreover, for distinct $s,t\in \N_0$ and $n\geq N$, we have $a^{sn}X\cap a^{tn}X=\emptyset$. This implies that the equation \[\{1_H,a^n\}\cdot Y=\{1_H,a^n\}^3\cdot X\] in $\Pf(H)$ is satisfied if and only if $Y=X\cup a^{2n}X\cup Z$, where $Z$ is any subset of $a^nX$, whence the equation has precisely $2^{|X|}$ solutions in $\Pf(H)$ for almost all $n\in\N$. Then, the equation \[\{1_K,x^n\}\cdot Y=\{1_K,x^n\}^3\cdot f(X)\] has $2^{|X|}$ solutions in $\Pf(K)$ for almost all $n\in\N$. However, $x$ is an element of infinite order in $K$ and by repeating the argument above, we infer that the equation \[\{1_K,x^n\}\cdot Y=\{1_K,x^n\}^3\cdot f(X)\] has $2^{|f(X)|}$ solutions in $\Pf(K)$ for almost all $n\in\N$. Hence $|X|=|f(X)|$, which proves our assertion.

\end{proof}

As a consequence, we can immediately infer that quotients together with their multiplicities are preserved under the pullback of an isomorphism.

\begin{lemma} \label{quot}
    Let $H$ and $K$ be monoids, $f:\Pf(H)\to\Pf(K)$ an isomorphism with pullback $g$ and $X\in\Pf(H)$. Then $a\in H\setminus\{1_H\}$ is a quotient of $X$ of multiplicity $n$ if and only if $g(a)$ is a quotient of $f(X)$ of multiplicity $n$. 
\end{lemma}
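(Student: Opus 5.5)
We need to show that $a$ is a quotient of $X$ of multiplicity $n$ iff $g(a)$ is a quotient of $f(X)$ of multiplicity $n$. By symmetry (apply the argument to $f^{-1}$, whose pullback is $g^{-1}$) it suffices to encode the multiplicity of $a$ as a quotient of $X$ by a quantity that $f$ preserves. The natural candidate is the cardinality of the product $\{1_H,a\}\cdot X$, since Lemma \ref{card} tells us $f$ preserves cardinalities.

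The key step is the count
\[
|\{1_H,a\}\cdot X| = |X\cup aX| = |X|+|aX|-|X\cap aX| = 2|X| - m,
\]
where $m$ is the multiplicity of $a$ as a quotient of $X$. Here $|aX|=|X|$ by cancellativity, and $|X\cap aX|$ equals the number of $b\in X$ with $ab\in X$, since $b\mapsto ab$ is injective. Thus $m = 2|X|-|\{1_H,a\}X|$.

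Now apply $f$. Since $f$ is a homomorphism and $f(\{1_H,a\})=\{1_K,g(a)\}$ by definition of the pullback,
\[
f(\{1_H,a\}\cdot X)=\{1_K,g(a)\}\cdot f(X).
\]
By Lemma \ref{card}, $|f(X)|=|X|$ and $|\{1_K,g(a)\}f(X)|=|\{1_H,a\}X|$. Applying the same count in $K$ (which is cancellative, and $g(a)\neq 1_K$ because $a\ne 1_H$ and $g$ is bijective with $g(1_H)=1_K$), the multiplicity of $g(a)$ as a quotient of $f(X)$ is
\[
2|f(X)|-|\{1_K,g(a)\}f(X)| = 2|X|-|\{1_H,a\}X| = m.
\]
This gives equality of multiplicities for every $a\neq 1_H$, including multiplicity $0$. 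In particular $a$ is a quotient (multiplicity $\ge 1$) exactly when $g(a)$ is, so both directions follow at once and no separate symmetry argument is needed.

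There is no real obstacle here: the only points to check are the injectivity of multiplication by $a$, which follows from cancellativity, and the fact that $g(a)\neq 1_K$, so that $g(a)$ is an admissible quotient.
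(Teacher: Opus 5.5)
Your proposal is correct and follows the paper's proof: both encode the multiplicity as $2|X|-|\{1_H,a\}\cdot X|$ and transport it to $K$ using $f(\{1_H,a\}\cdot X)=\{1_K,g(a)\}\cdot f(X)$ together with Lemma \ref{card}. You also spell out two points the paper leaves implicit: the cancellativity-based count $|X\cap aX|=m$, and the observation that $g(a)\neq 1_K$.
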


\begin{proof}
    We observe that $a\in H\setminus\{1_H\}$ is a quotient of $X$ of multiplicity $n$ if and only if the set $\{1_H,a\}\cdot X$ contains precisely $2|X|-n$ elements. Since by Lemma \ref{card}, this is equivalent to the set $\{1_K,g(a)\}\cdot f(X)$ containing precisely $2|f(X)|-n$ elements, we are done.
\end{proof}

We begin with the investigation of how the pullback of an isomorphism $f:\Pf(H)\to\Pf(K)$ behaves with regards to products of elements. The following lemma will serve as a baseline.

\begin{lemma} \label{core}
    Let $H,K$ be monoids, $f:\Pf(H)\to \Pf(K)$ an isomorphism with pullback $g$ and $a\in H$ an element of infinite order.
    \begin{enumerate}
        \item If $b\in H$ with $\ord(b)=2$, then $g(ab)=g(a)g(b)$.
        \item If $b\in H$ with $\ord(b)\geq 3$, and $g(ab)\neq g(a)g(b)$, then $g(ab)\in\{g(a)g(b)^{-1},g(b)g(a)^{-1}\}$.
    \end{enumerate}

    \begin{proof} We set $x:=g(a), y:=g(b)$ and $z:=g(ab)$. \\

       \noindent (1) Note that the equation \[\{1_H,ab\}\cdot\{1_H,b\}=\{1_H,a,b,ab\}=\{1_H,a\}\cdot\{1_H,b\}\] implies that $\{1_K,z\}$ divides $\{1_K,x\}\cdot\{1_K,y\}$, whence $z\in \{1_K,x,y,xy\}$. By the bijectivity of $g$ and the fact that $ab\not\in \{1_H,a,b\}$, we then conclude that $z=xy$. \\

       \noindent (2) By the assumption that $z\neq xy$, we can assume that $a\neq b^k$ and $b\neq a^k$ for all $k\in\Z$, which clearly means that \[ab\not\in\{a,b,a^{-1},b^{-1}\}\] and \[z\not\in\{x,y,x^{-1},y^{-1}\}.\] We observe that $ab$ is a quotient of the set \[\{1_H,a\}\cdot\{1_H,b\}=\{1_H,a,b,ab\},\] whence by Lemma \ref{quot}, $z$ is a quotient of $\{1_K,x\}\cdot\{1_K,y\}$. Consequently, we have \[z\in \{x,y,x^{-1},y^{-1},xy,xy^{-1},yx^{-1},x^{-1}y^{-1}\}\] and, keeping our assumptions in mind, more specifically \[z\in \{xy^{-1},yx^{-1},x^{-1}y^{-1}\}.\] 
       Suppose now that $z=x^{-1}y^{-1}$, which implies that $x,y\in K^\times$ and consequently, that $a,b\in H^\times$. We set \[X:=\{1_K,x\}\cdot\{1_K,x^{-1}y^{-1}\}\cdot\{1_K,y\}=\{1_K,x,y,xy,x^{-1},y^{-1},x^{-1}y^{-1}\}\] and observe that the equation \[X=\{1_K,x^{-1}\}\cdot\{1_K,xy\}\cdot\{1_K,y^{-1}\}\] implies that both $a^{-1}$ and $b^{-1}$ are contained in \[f^{-1}(X)=\{1_H,a\}\cdot\{1_H,ab\}\cdot\{1_H,b\}=\{1_H,a,b,ab,a^2b,ab^2,a^2b^2\}.\] Using our initial assumptions, we conclude that $a^{-1}\in\{ab^2,a^2b^2\}$. However, if $a^{-2}=b^2$, then $x^{-2}=y^2$ and $xy=x^{-1}y^{-1}$. In particular, we have $z=xy$, contradicting our assumption. Hence we are left with $a^{-1}=a^2b^2$, or equivalently, $a^{-3}=b^2$ and by a symmetric argument, $b^{-3}=a^2$. However, these two equations clearly imply that $a=b$, another contradiction. Hence $z\in\{xy^{-1},yx^{-1}\}$, as desired.
    \end{proof}
\end{lemma}

We now continue with the case, where both $a,b\in H$ are elements of infinite order, which are not independent.

\begin{proposition}\label{relation}
    Let $H,K$ be monoids and let $f:\Pf(H)\to\Pf(K)$ be an isomorphism with pullback $g$. If $a,b\in H$ both have infinite order and $a^n=b^m$ for some nonzero $n,m\in\Z$, then $g(ab)=g(a)g(b)$.
\end{proposition}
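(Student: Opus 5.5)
The plan is to combine Lemma \ref{core}(2) with the fact that $g$ preserves powers: compute a suitable power of $ab$ in two ways and rule out both exceptional values of $g(ab)$. Set $x:=g(a)$, $y:=g(b)$ and $z:=g(ab)$. Since $b$ has infinite order, $\ord(b)\geq 3$. So by Lemma \ref{core}(2), if $z\neq xy$ then $z\in\{xy^{-1},yx^{-1}\}$, and it suffices to exclude these two possibilities. All computations take place in the group $\mathsf{q}(K)$.

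First I reduce to $n,m>0$. If $n$ and $m$ have opposite signs, then $a^{|n|}b^{|m|}=1_H$, so $a,b\in H^\times$. In that case Lemma \ref{pullbackunit} gives $g(c^k)=g(c)^k$ for all $k\in\Z$ and $c\in\{a,b,ab\}$, so the argument below works verbatim with integer exponents. If both are negative, replace $(n,m)$ by $(-n,-m)$. So assume $n,m\in\N$.

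Applying $g$ to $a^n=b^m$ and using Proposition \ref{Tringaliprop} gives $x^n=g(a^n)=g(b^m)=y^m$. Next,
\[(ab)^{nm}=a^{nm}b^{nm}=b^{m^2}b^{nm}=b^{m(m+n)},\]
and both exponents are positive. Applying $g$ and Proposition \ref{Tringaliprop} again yields
\[z^{nm}=y^{m(m+n)}.\]

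Now suppose $z=xy^{-1}$. Then $z^{nm}=x^{nm}y^{-nm}=y^{m^2}y^{-nm}=y^{m(m-n)}$. Comparing with the previous display gives $y^{2mn}=1_K$. If instead $z=yx^{-1}$, then $z^{nm}=y^{nm}x^{-nm}=y^{nm-m^2}$, and comparing gives $y^{2m^2}=1_K$. In either case $y$ would have finite order. But $\ord(y)=\ord(b)=\infty$ by the remark following Proposition \ref{Tringaliprop}, a contradiction. Hence $z=xy$.

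I expect the only delicate point to be the sign bookkeeping, namely ensuring every exponent fed into Proposition \ref{Tringaliprop} is non-negative unless we are in the unit case covered by Lemma \ref{pullbackunit}. The core exponent comparison is routine.
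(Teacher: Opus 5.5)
Your proof is correct and matches the paper's argument: Lemma \ref{core}(2) reduces to $z\in\{xy^{-1},yx^{-1}\}$, and then computing $g$ of a power of $ab$, rewritten as a power of $b$, forces $y$ to have finite order. The paper uses the slightly shorter identity $(ab)^n=b^{n+m}$ where you use $(ab)^{nm}=b^{m(m+n)}$, and it handles signs through its standing convention of applying Lemma \ref{pullbackunit} freely, whereas you treat the unit case explicitly.
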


\begin{proof}
    We set $x:=g(a), y:=g(b)$ and $z:=g(ab)$ and note that we can freely assume that $n,m\not\in \{1,-1\}$. Moreover, we can assume without loss of generality that $n>0$ and observe that $x^n=y^m$. Suppose by way of contradiction that $z\neq xy$. Then, by Lemma \ref{core}, we have $z=xy^{-1}$ or $z=yx^{-1}$. If $z=xy^{-1}$, then \[y^{n+m}=g(b^{n+m})=g((ab)^n)=z^n=x^ny^{-n}=y^{m-n},\] which implies that both $y$ and $b$ have finite order, a contradiction to our assumption. Similarly, if $z=yx^{-1}$, then \[y^{n+m}=g(b^{n+m})=g((ab)^n)=z^n=y^nx^{-n}=y^{n-m},\] again, a contradiction.
\end{proof}

The next case is that of two elements $a,b\in H$, of which precisely one has infinite order.

\begin{proposition}\label{torsionnontorsion}
    Let $H,K$ be monoids, $f:\Pf(H)\to\Pf(K)$ an isomorphism with pullback $g$ and $a,b\in H$. If $b$ has finite order, then $g(ab)=g(a)g(b)$.
\end{proposition}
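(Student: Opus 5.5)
The plan is to reduce to the case where $a$ has infinite order and $\ord(b)\geq 3$, and then to rule out the single remaining alternative to $g(ab)=g(a)g(b)$ left by Lemma \ref{core}(2). The main obstacle is that I do not yet have a concrete argument excluding that alternative; the candidates I tested all fail, as explained below.

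\textbf{Reductions.} If $a$ also has finite order, the claim is \cite[Proposition 4.7]{3Tr-Ya25}. So let $a$ have infinite order. If $b=1_H$ there is nothing to prove, and if $\ord(b)=2$ we are done by Lemma \ref{core}(1). Assume now $n:=\ord(b)\geq 3$ and set $x:=g(a)$, $y:=g(b)$, $z:=g(ab)$. By Lemma \ref{pullbackunit}, $b\in H^\times$ and $y\in K^\times$, with $\ord(y)=n$. If $z\neq xy$, Lemma \ref{core}(2) gives $z\in\{xy^{-1},\,yx^{-1}\}$.

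\textbf{Excluding $z=yx^{-1}$.} Since $(ab)^n=a^n$, Proposition \ref{Tringaliprop} gives $z^n=x^n$. If $z=yx^{-1}$, then $z^n=x^{-n}$, so $x^{2n}=1_K$. This contradicts that $x$, like $a$, has infinite order.

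\textbf{Excluding $z=xy^{-1}$: the main obstacle.} Here the order-and-power arguments of Proposition \ref{relation} give no contradiction. The assignment $ab^k\mapsto xy^{-k}$ behaves consistently with all power relations. For example, Proposition \ref{relation} applied to the dependent pair $ab,ab^{-1}$ yields $g(ab^{-1})=xy$, and applied to the dependent pair $a,ab$ yields $g(a^2b)=x^2y^{-1}$; both are consistent. So the contradiction has to come from the finer structure of $\Pf$, through equations of sets whose images under $f$ are known. Those are products of $2$-element sets, since $f(\{1_H,c\})=\{1_K,g(c)\}$ and $f$ is multiplicative.

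I would use Lemma \ref{quot}: compare multiplicities of suitable quotients such as $a$, $b$ and $ab$ in the sets
\[
\{1_H,a\}\cdot\{1_H,b\}^k,\qquad \{1_H,ab\}\cdot\{1_H,b\}^k,\qquad \{1_H,ab\}\cdot\{1_H,b^{-1}\},
\]
against the corresponding multiplicities in the images, computed with $z=xy^{-1}$. The candidates I checked give equal multiplicities on both sides. This reflects the fact that the substitution $b\mapsto b^{-1}$ is an automorphism of the group generated by $a$ and $b$.

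Accordingly, I would look for an asymmetry that this automorphism does not respect. One source is that the exponents of $a$ are nonnegative in $H$, so sets like $\{1_H,a\}\cdot\{1_H,ab\}\cdot\{1_H,b\}$ are not invariant under the flip. The other is divisibility relations, where we know $Y$ divides $X$ and not merely that $Y\subseteq X$. Concretely, I would test the $n-1$ factorizations coming from $\{1_H,b\}^{n-1}=\langle b\rangle$ combined with $\{1_H,a\}$. The hope is that one of them is divisible by $\{1_H,ab\}$ in $\Pf(H)$ while its image is not divisible by $\{1_K,xy^{-1}\}$ in $\Pf(K)$.
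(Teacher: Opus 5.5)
Your reductions are correct, and so is your exclusion of $g(ab)=yx^{-1}$ via $g(ab)^n=g(a^n)=x^n$. This is a neat shortcut; the paper instead shows that $f$ maps $\Pf(M)$ into $\Pf(\langle x,y\rangle)$, where $M$ is the monoid generated by $a$ and $b$. But the case $g(ab)=xy^{-1}$ is the whole content of the proposition, and you leave it open, so there is a genuine gap.

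Moreover, the search you propose cannot close it. Put $G=\langle b\rangle$, and let $\tau$ be the automorphism of $\mathsf{q}(M)$ fixing $a$ and inverting $b$. For $P=\prod_i\{1_H,c_i\}$ with $c_i\in M$, let $\gamma_P$ be the product of those $c_i$ lying in $G$. Since $\{1_H,c\}^{-1}=c^{-1}\{1_H,c\}$, one gets $\prod_i\{1_H,\rho(c_i)\}=\gamma_P\,\tau(P)$, where $\rho(b^t)=b^t$ and $\rho(a^sb^t)=a^sb^{-t}$ for $s\geq 1$. Two factorizations of $P$ give values of $\gamma_P$ that differ by an element stabilizing $P\cap G$, and hence every layer $P\cap a^mG$. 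So $P\mapsto\gamma_P\tau(P)$ is a well-defined, multiplicative, involutive map on the submonoid of $\Pf(M)$ generated by two-element sets. Transported to $K$ via $a\mapsto x$, $b\mapsto y$, it sends $\{1_H,ab\}$ to $\{1_K,xy^{-1}\}$ and $\{1_H,ab^{-1}\}$ to $\{1_K,xy\}$. Because it acts on each set by $\tau$ followed by a translation, it preserves the following:
\begin{itemize}
\item cardinalities;
\item quotient multiplicities (for $c\in G$, $c$ and $c^{-1}$ have equal multiplicity in any set);
\item divisibility by two-element sets (if $\{1_H,ab\}$ divides $P$, then $\gamma_P\in P\cap G$ forces $\gamma_Pab\in P$).
\end{itemize}
Hence no equality, quotient count or divisibility statement involving only products of two-element sets can exclude $g(ab)=xy^{-1}$. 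This includes the products built from $\{1_H,b\}^{n-1}=G$ and $\{1_H,a\}$ that you propose. Your failed tests are an instance of this, not bad luck.

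The missing idea is to use sets outside that submonoid, where the translation factor is no longer forced to be multiplicative, and to determine $f$ on them indirectly. The paper does this in three steps.
\begin{enumerate}
\item It shows $f(X)=g(X)$ for all $X\in\Pf(G)$, using the rigidity result \cite[Corollary 10]{2Ra25}: an automorphism of $\Pf(\Z/n\Z)$ with identity pullback is the identity.
\item It shows $f(G)\cap f(X)=g(G\cap X)$ for all $X\in\Pf(M)$, via $Y=\{1_H\}\cup aG$ and $f(Y)=\{1_K\}\cup xf(G)$.
\item Set $G_k:=G\setminus\{b^k\}$ and $X=G_k\cup\{ab^j\}$. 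The identity $G\cdot X=G\cdot\{1_H,a\}$ together with Lemma \ref{card} gives $f(X)=g(G_k)\cup\{xy^s\}$. The exponent $s$ is then fixed by Lemma \ref{quot}, since a suitable element is not a quotient of $X$.
\end{enumerate}
For $X_1=G_{-1}\cup\{a\}$, $X_2=G_1\cup\{a\}$, $X_3=G_1\cup\{ab\}$ and $X_4=G_{-2}\cup\{ab^{-1}\}$, this yields $s=-2,2,1,-3$. Now $X_1X_2=X_3X_4=G\cup aG\cup\{a^2\}$. On the other side, $f(X_1)f(X_2)$ and $f(X_3)f(X_4)$ are $f(G)\cup xf(G)$ together with $x^2$ and $x^2y^{-2}$ respectively. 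This forces $y^2=1_K$, contradicting $\ord(y)\geq 3$.

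These four images are exactly the values your symmetric model would assign, namely $\gamma\tau(X_i)$ with $\gamma=b^{-2},b^{2},b^{2},b^{-4}$. The two products pick up different translation factors, $1$ versus $b^{-2}$. That discrepancy is absorbed by the layers $G$ and $aG$ but not by the singleton layer $\{a^2\}$, and this is where the symmetry breaks.
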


\begin{proof}
    We can assume that $b\neq 1_H$ and by \cite[Proposition 4.7]{3Tr-Ya25}, that $a$ has infinite order. Set $x:=g(a)$ and $y:=g(b)$. We then have $\ord(b)=\ord(y)$. Let $M$ denote the monoid generated by $a$ and $b$ and let $G:=M^\times$ be the (finite) cyclic group generated by $b$. Clearly, \[f(G)=f(\{1_H,b\}^{\ord(b)})=\{1_K,y\}^{\ord(y)}=g(G)\] is the cyclic group generated by $y$. Moreover, it is straightforward to verify that for every $X\in\Pf(M)$, there exist positive integers $s,t$ such that \[G\cdot X\cdot \{1_H,a\}^s=G\cdot\{1_H,a\}^{s+t}.\] This implies that $f(X)$ divides $f(G)\cdot\{1_K,x\}^{s+t}$, whence it is contained in the monoid generated by $x$ and $y$. Furthermore, by Lemma \ref{core} (2), we have \[g(a^sb^t)\in \{x^sy^t,x^sy^{-t},x^{-s}y^t\}\] for every $s,t\in\N$. However, by our previous observation, $g(a^sb^t)=x^{-s}y^t$ is impossible, whence \[g(a^sb^t)\in \{x^sy^t,x^{s}y^{-t}\}.\] 

    \noindent \textbf{Claim 1:} \textit{Let $X\in\Pf(G)$. Then $f(X)=g(X)$.} \\

    \noindent \textit{Proof of claim:} Since $X\in\Pf(G)$ and $Y\in\Pf(f(G))$ are equivalent to $X$ dividing $G$ in $\Pf(H)$ and $Y$ dividing $f(G)$ in $\Pf(K)$ respectively, we can easily deduce that $f$ restricts to an isomorphism between $\Pf(G)$ and $\Pf(f(G))$. Since $g^{-1}$ restricts to an isomorphism between $f(G)$ and $G$, it is then straightforward to verify that the map \[F:\Pf(f(G))\to \Pf(G),\] \[F(Y)=g^{-1}(Y)\] is itself an isomorphism with pullback ${g^{-1}}_{|f(G)}$. It follows that \[F\circ f_{|\Pf(G)}\] is an automorphism of $\Pf(G)$ with its pullback being the identity map.  Since $G$ is a finite cyclic group, we infer from \cite[Corollary 10]{2Ra25}, that any automorphism of $\Pf(G)$, with its pullback being the identity automorphism, is itself the identity automorphism. Hence $f_{|\Pf(G)}=F^{-1}$ and $f(X)=g(X)$ for every $X\in\Pf(G)$, proving our claim.  \\

\noindent \textbf{Claim 2:} \textit{Let $X\in\Pf(M)$. Then $f(G)\cap f(X)=g(G\cap X)$.} \\
    
    \noindent \textit{Proof of claim:} Set $Y:=\{1_H\}\cup aG$. Then the equation \[G\cdot Y=G\cdot \{1_H,a\}\] implies that $f(Y)\subseteq f(G)\cdot\{1_H,x\}$ and $f(Y)\not\subseteq f(G)$. Moreover, we have \[\{1_H,ab^s\}\cdot Y=\{1_H,ab^t\}\cdot Y=\{1_H\}\cup aG\cup a^2G\] for every $s,t\in \N$. Then, from our earlier observation that $g(ab^s)\in\{xy^s,xy^{-s}\}$ and the bijectivity of $g$, it follows that \[\{1_K,xy^s\}\cdot f(Y)=\{1_K,xy^t\}\cdot f(Y)\] for every $s,t\in\N$. Consequently, $xf(G)\subseteq f(Y)$ and since by Lemma \ref{card}, $f$ is cardinality-preserving, we conclude that $f(Y)=\{1_K\} \cup xf(G)$. \\

    Let now $X\in\Pf(M)$ and let $k$ be the maximal integer such that $X\cap a^kG\neq \emptyset$. By Claim 1, we can assume that $X\not\subseteq G$, whence $k\geq 1$. Then the equation \[G\cdot \{1_H,a\}^k\cdot X=G\cdot\{1_H,a\}^{2k}\] clearly implies that $k$ is the maximal integer such that $f(X)\cap x^kf(G)\neq \emptyset$. From \[Y^k\cdot X= Y^{2k}\cup (G\cap X)=Y^{2k}\cdot (G\cap X)\] and Claim 1, it then follows that  \[f(Y)^k\cdot f(X)=f(Y)^{2k}\cdot f(G\cap X)=f(Y)^{2k}\cdot g(G\cap X).\] Moreover, keeping in mind that $f(Y)=\{1_K\} \cup xf(G)$, we have \[f(G)\cap f(X)=f(G)\cap (f(Y)^k\cdot f(X))=f(G)\cap (f(Y)^{2k}\cdot g(G\cap X))=g(G\cap X),\] proving the claim. \\
    
    Suppose now towards a contradiction that $g(ab)\neq xy$, whence by Lemma \ref{core} (1), we have $\ord(b)\geq 3$. Then, by our earlier observation, we have $g(ab)=xy^{-1}$, which clearly also implies that $g(ab^{-1})=xy$. For any integer $k$, not divisible by $\ord(b)$, we set $G_k:=G\setminus \{b^k\}$, $f(G)_k:=f(G)\setminus\{y^k\}$ and note that by Claim 1, we have $f(G_k)=g(G_k)=f(G)_k$. Moreover, we set \[X_1:=G_{-1}\cup\{a\},\] \[X_2:=G_1\cup\{a\},\] \[X_3:=G_1\cup\{ab\}\] and \[X_4:=G_{-2}\cup\{ab^{-1}\}.\] Note that \[G\cdot X_1=G\cdot\{1_H,a\},\] whence $f(X_1)\subseteq f(G)\cdot\{1_K,x\}$. Furthermore, by Claim 2 and Lemma \ref{card} we obtain \[f(X_1)=f(G)_{-1}\cup \{xy^s\}\] for some positive integer $s$. 
    However, $ab$ is not a quotient of $X_1$, whence, by Lemma \ref{quot}, $g(ab)=xy^{-1}$ is not a quotient of $f(X_1)$. This leaves us with \[f(X_1)=f(G)_{-1}\cup \{xy^{-2}\}.\] Similarly, by arguing that $ab^{-1}$ is not a quotient of $X_2$, $a$ is not a quotient of $X_3$ and that $ab$ is not a quotient of $X_4$, we obtain \[f(X_2)=f(G)_1\cup\{xy^2\},\] \[f(X_3)=f(G)_1\cup\{xy\}\] and\[f(X_4)=f(G)_{-2}\cup \{xy^{-3}\}.\] Then, keeping in mind that $\ord(b)=\ord(y)\geq 3$, the equations \[X_1\cdot X_2=G\cup aG\cup \{a^2\}=X_3\cdot X_4,\] \[f(X_1)\cdot f(X_2)=f(G)\cup xf(G)\cup\{x^2\}\] and \[f(X_3)\cdot f(X_4)=f(G)\cup xf(G)\cup\{x^2y^{-2}\}\] imply that $x^2=x^2y^{-2}$, a contradiction to $\ord(y)\geq 3$.

\end{proof}

\begin{corollary}\label{independent}
    Let $H,K$ be monoids, $f:\Pf(H)\to\Pf(K)$ an isomorphism with pullback $g$ and $a,b\in H$. Then $g(ab)\neq g(a)g(b)$ only if $a$ and $b$ are independent elements.
\end{corollary}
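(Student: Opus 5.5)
We argue by contraposition: assuming that $a$ and $b$ are \emph{not} independent, we show that $g(ab)=g(a)g(b)$. The argument is a case distinction on the orders of $a$ and $b$, with each case covered by a result already established.

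\textbf{Case 1: $a$ or $b$ has finite order.} Since $H$ is commutative, $ab=ba$, so we may assume without loss of generality that $b$ has finite order. Proposition \ref{torsionnontorsion} then gives $g(ab)=g(a)g(b)$ directly. Its proof treats separately the situation where $a$ also has finite order, by citing \cite[Proposition 4.7]{3Tr-Ya25}, so no additional hypothesis on $a$ is needed here.

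\textbf{Case 2: both $a$ and $b$ have infinite order.} By definition, $a$ and $b$ fail to be independent exactly when there are $n,m\in\Z$, not both zero, with $a^n=b^m$. We first check that in fact both exponents are nonzero. If $m=0$, then $a^n=1_H$ with $n\neq 0$. Passing to $-n$ if necessary (multiplying $a^n=1_H$ by $a^{-n}$ in $\mathsf{q}(H)$), we get a positive power of $a$ equal to $1_H$, which contradicts $a$ having infinite order. The case $n=0$ is symmetric. Hence $n,m\neq 0$, and Proposition \ref{relation} yields $g(ab)=g(a)g(b)$.

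The two cases are exhaustive, so $g(ab)\neq g(a)g(b)$ forces $a$ and $b$ to be independent. There is no substantial obstacle in this argument. The only point needing care is the translation between the definition of independence (a relation $a^n=b^m$ with $(n,m)\neq(0,0)$) and the hypothesis of Proposition \ref{relation} (both $n,m$ nonzero), and infinite order handles exactly that.
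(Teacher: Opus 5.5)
Your proof is correct and follows the paper's approach: the paper's proof is the one line ``This follows from Propositions \ref{relation} and \ref{torsionnontorsion}.'' Your split into the case where one element has finite order and the case where both have infinite order with a nontrivial relation (where infinite order forces both exponents to be nonzero) spells out that same argument.
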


\begin{proof}
    This follows from Propositions \ref{relation} and \ref{torsionnontorsion}.  
\end{proof}

We now turn our attention to elements $a,b\in H$ that are independent.

\begin{lemma} \label{powersindependent}
    Let $H,K$ be monoids, $f:\Pf(H)\to\Pf(K)$ an isomorphism with pullback $g$ and $a,b\in H$. If $g(ab)\neq g(a)g(b)$, then $g(a^nb^m)\neq g(a)^ng(b)^m$ for every $n,m\in\N$.
\end{lemma}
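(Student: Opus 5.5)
The plan is to reduce the statement to a one-variable step and then apply that step twice. Set $x:=g(a)$ and $y:=g(b)$. By Corollary \ref{independent}, the hypothesis $g(ab)\neq xy$ forces $a$ and $b$ to be independent. In particular both have infinite order, and $x,y$ have infinite order too, since $g$ preserves orders. The key step is the following claim: \emph{if $g(ab)\neq g(a)g(b)$, then $g(a^nb)\neq g(a)^ng(b)$ for all $n\in\N$.}

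Granting the claim, the lemma follows quickly. Apply the claim to the pair $(a,b)$ to get $g(a^nb)\neq g(a^n)g(b)$, using $g(a^n)=x^n$. Then apply the claim again to the pair $(b,a^n)$, with the roles of the two factors exchanged (commutativity makes this harmless). This gives $g(a^nb^m)\neq g(b)^m g(a^n)=x^ny^m$, as desired.

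To prove the claim, first note that by Lemma \ref{core} (2) (applicable since $b$ has infinite order, so $\ord(b)\geq 3$), we have $g(ab)\in\{xy^{-1},yx^{-1}\}$. Treat the case $g(ab)=xy^{-1}$; the other case is handled symmetrically. Consider
\[X:=\{1_H,a\}^{n-1}\cdot\{1_H,ab\}=\{1_H,a,\dots,a^{n-1}\}\cup\{ab,a^2b,\dots,a^nb\}.\]
Since $f$ is a homomorphism, $f(X)=\{1_K,x\}^{n-1}\cdot\{1_K,xy^{-1}\}=\{1_K,\dots,x^{n-1}\}\cup\{xy^{-1},\dots,x^ny^{-1}\}$. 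The element $a^nb$ is a quotient of $X$ (take $1_H\in X$). By Lemma \ref{quot}, $w:=g(a^nb)$ is therefore a quotient of $f(X)$, i.e.\ $w=vu^{-1}$ for distinct $u,v\in f(X)$. Using the independence of $x$ and $y$, the quotients of $f(X)$ are of three kinds: $x^k$ with $|k|\leq n$, $x^ky^{-1}$, and $x^ky$. A quotient of the form $x^ky$ arises only as $x^j(x^iy^{-1})^{-1}=x^{j-i}y$ with $0\le j\le n-1$ and $1\le i\le n$, so $k\leq n-2$. Hence $x^ny$ is not a quotient of $f(X)$, and $g(a^nb)\neq x^ny=g(a)^ng(b)$.

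In the case $g(ab)=yx^{-1}$, the same set gives $f(X)=\{1_K,\dots,x^{n-1}\}\cup\{x^{-1}y,\dots,x^{n-2}y\}$. Writing $x^ny$ as a quotient would require either $x^{j}y\cdot x^{-i}$ with $j\le n-2$ and $i\ge 0$, or $x^{j}\cdot(x^{i}y)^{-1}$, which carries $y^{-1}$. Neither can equal $x^ny$ by independence, so again $x^ny$ is not a quotient of $f(X)$.

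I expect the only delicate point to be this bookkeeping of exponents, in particular making sure that independence of $x,y$ in $\mathsf{q}(K)$ really excludes every coincidence. That independence transfers from $a,b$ to $x,y$, since $g^{-1}$ is also a pullback (of $f^{-1}$) and Corollary \ref{independent} applies to it. Multiplicities are not needed; bare quotient membership from Lemma \ref{quot} suffices.
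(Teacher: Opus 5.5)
Your proof is correct, but it takes a different route from the paper.

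\textbf{The paper's route.} The paper assumes without loss of generality that $g(ab)=xy^{-1}$. It then proves by induction on $n$ (with a base case $n=1$ covering all $m$) the sharper statement $g(a^nb^m)\in\{x^ny^{-m},y^mx^{-n}\}$. At each step it applies Lemma~\ref{core}(2) to two different factorizations, namely $a^n\cdot b^m$ and $a\cdot(a^{n-1}b^m)$, or $(ab)\cdot b^{m-1}$ in the base case. It then intersects the two candidate sets using the independence of $x$ and $y$.

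\textbf{Your route.} You give a direct, non-inductive obstruction instead. The element $a^nb$ is a quotient of $X=\{1_H,a\}^{n-1}\{1_H,ab\}$. The image $f(X)$ is completely determined by $g$, since $X$ is a product of 2-element sets. Tracking $y$-exponents shows that $x^ny$ is not a quotient of $f(X)$. You then get the second exponent by reapplying this one-variable claim to the pair $(b,a^n)$. At this stage it is not known a priori which of $x^ny^{-1}$ and $yx^{-n}$ equals $g(a^nb)$. So the bootstrapping genuinely needs the claim in both cases $g(ab)=xy^{-1}$ and $g(ab)=yx^{-1}$. You correctly treat these separately rather than by a without-loss-of-generality argument. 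In the second case you omit the pure powers $x^k$ from your list of quotients, but these trivially cannot equal $x^ny$.

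\textbf{Comparison.} Your route uses Lemma~\ref{quot} directly and avoids the double induction. The paper's route produces the explicit two-element candidate set immediately. That set also follows from your conclusion combined with Lemma~\ref{core}(2) applied to $(a^n,b^m)$, and this is exactly how the lemma is used later in Proposition~\ref{onereversed}. Finally, your justification that independence passes from $a,b$ to $x,y$ is correct: apply Corollary~\ref{independent} to $f^{-1}$, whose pullback is $g^{-1}$. This fills in a step the paper states without comment.
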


\begin{proof}
    Set $x:=g(a)$ and $y:=g(b)$ and suppose that $g(ab)\neq xy$. Then, by Lemma \ref{core}, we have $g(ab)\in \{xy^{-1},yx^{-1}\}$ and we can assume without loss of generality that $g(ab)=xy^{-1}$. We will now show that \[g(a^nb^m)\in \{x^ny^{-m},y^mx^{-n}\}\] for every $n,m\in \N$ by induction on $n$. If $n=1$, we can freely assume that $m\geq 2$. By Lemma \ref{core} and our assumption that $g(ab)=xy^{-1}$, we then have \[g(ab^m)\in\{xy^m,xy^{-m},y^mx^{-1}\}\] and \[g((ab)b^{m-1})\in\{xy^{m-2},xy^{-m},y^{m}x^{-1}\}.\] Since by Corollary \ref{independent}, $a$ and $b$ are independent, so are $x$ and $y$, which leaves us with \[g(ab^m)\in \{xy^{-m},y^{m}x^{-1}\}.\] Suppose now that $n\geq 2$. Then, by Lemma \ref{core} and the induction hypothesis, we obtain \[g(a^nb^m)\in\{x^ny^m,x^ny^{-m},y^mx^{-n}\}\] and \[g(a (a^{n-1}b^m))\in \{x^ny^{-m},x^{2-n}y^m,x^{n-2}y^{-m},y^mx^{-n}\}\] and thus \[g(a^nb^m)\in\{x^ny^{-m},y^mx^{-n}\},\] as desired.
\end{proof}

In order to fully understand how the pullback of an isomorphism $f:\Pf(H)\to\Pf(K)$ can fail to be an isomorphism itself, we need another definition. In \cite{Tr-Ya25}, Tringali and Yan proved that the only non-trivial automorphism of $\P_{\fin,0}(\N_0)$ is the \textit{reversion map} \[\rev:\P_{\fin,0}(\N_0)\to\P_{\fin,0}(\N_0),\] defined by \[\rev(X)=\max X-X.\] Let now $H,K$ be monoids, $f:\Pf(H)\to\Pf(K)$ an isomorphism with pullback $g$, $a\in H$ an element of infinite order and $x=g(a)\in K$. We denote by $H_a$ and $K_x$ the submonoids of $H$ and $K$ generated by $a$ and $x$ respectively. It is then easy to verify that for every $X\in\Pf(H_a)$, there is $n\in\N$ such that $X$ divides $\{1_H,a\}^n$, whence $f(X)\in \Pf(K_x)$. By a symmetric argument, involving $f^{-1}$, we infer that $f$ restricts to an isomorphism $\Pf(H_a)\to\Pf(K_x)$. 
Then, since both $H_a$ and $K_x$ are isomorphic to $(\N_0,+)$ in a unique way, we conclude that the restriction of $f$ to $\Pf(H_a)$ either corresponds to the identity automorphism on $\P_{\fin,0}(\N_0)$ or to the reversion map. The former is equivalent to \[f(\{1_H,a,a^3\})=\{1_K,x,x^3\}\] and the latter, to \[f(\{1_H,a,a^3\})=\{1_K,x^2,x^3\},\] in which case we say that $a$ is \textit{reversed under} $f$. It is clear that an element $a\in H$ is reversed under $f$ if and only if $g(a)$ is reversed under $f^{-1}$. Moreover, $a$ is reversed under $f$ if and only if $a^k$ is reversed under $f$ for every $k\in\N$. We denote by  $H_{R,f}$ the set containing $1_H$ together with all elements of $H$ that are reversed under $f$, we set $H_{N,f}:=H\setminus H_{R,f}$ and whenever the isomorphism $f$ is clear from the context, we simply write $H_R$ and $H_N$. We continue by showing that only non-units can be reversed under $f$.

\begin{lemma} \label{unitsnotrev}
    Let $H,K$ be monoids and $f:\Pf(H)\to\Pf(K)$ an isomorphism with pullback $g$. If $a\in H^\times$ is a non-trivial unit of $H$ of infinite order, then $a$ is not reversed under $f$.
\end{lemma}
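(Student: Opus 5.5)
Suppose, towards a contradiction, that $a\in H^\times$ is a non-trivial unit of infinite order that is reversed under $f$. Set $x:=g(a)$; by Lemma \ref{pullbackunit}, $x\in K^\times$ and $g(a^{-1})=x^{-1}$. The idea is to find a set built from $a$ and $a^{-1}$ whose image is determined in two incompatible ways: once through the restriction of $f$ to $\Pf(H_a)$, which is the reversion, and once through the restriction to $\Pf(H_{a^{-1}})$. Since $a^{-1}$ is reversed exactly when $a$ is not, the two restrictions need not behave the same way, and this is what we will exploit.

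\emph{Case 1: $a^{-1}$ is not reversed.} Then $f(\{1_H,a^{-1},a^{-3}\})=\{1_K,x^{-1},x^{-3}\}$, while $f(\{1_H,a,a^3\})=\{1_K,x^2,x^3\}$. Translating by units gives
\[\{1_H,a^{-1},a^{-3}\}\cdot\{1_H,a^3\}=\{1_H,a^{-1},a^{-3},a^3,a^2\}\cup\{1_H\}=\{1_H,a^{-3},a^{-1},a^2,a^3\}.\]
This set is also equal to $\{1_H,a^2,a^3\}\cdot\{1_H,a^{-3}\}=\{1_H,a^2,a^3,a^{-3},a^{-1}\}$. On the $H$-side we therefore have
\[\{1_H,a^{-1},a^{-3}\}\cdot\{1_H,a^3\}=\{1_H,a^{2},a^{3}\}\cdot\{1_H,a^{-3}\}.\]
Applying $f$ and using $g(a^{\pm 3})=x^{\pm3}$, the left side becomes $\{1_K,x^{-1},x^{-3}\}\cdot\{1_K,x^3\}$. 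The right side is $f(\{1_H,a^2,a^3\})\cdot\{1_K,x^{-3}\}$, and under reversion $f(\{1_H,a^2,a^3\})=\{1_K,x,x^3\}$. Comparing, the left side is $\{1_K,x^{-3},x^{-1},x^2,x^3\}$ and the right side is $\{1_K,x,x^3,x^{-3},x^{-2}\}$. Since $x$ has infinite order these sets differ, which is a contradiction. In carrying this out I would double-check each product, since the whole case rests on the exponents.

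\emph{Case 2: $a^{-1}$ is also reversed.} Here I would again use a set mixing positive and negative powers, chosen so that the reversion on $\Pf(H_a)$ and the reversion on $\Pf(H_{a^{-1}})$ clash. A natural choice is
\[Y=\{1_H,a\}\cdot\{1_H,a^{-1},a^{-3}\}=\{1_H,a,a^{-1},a^{-2},a^{-3}\}.\]
Its image is $\{1_K,x\}\cdot\{1_K,x^{-2},x^{-3}\}=\{1_K,x,x^{-1},x^{-2},x^{-3}\}$. Multiplying by the unit $a^3$ shows that $Y$ is related to $\{1_H,a,a^2,a^3,a^4\}\ni 1$ via divisibility by $\{1_H,a^{\pm 3}\}$-type factors. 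I would then seek a second factorization of $Y$, for instance $Y=\{1_H,a^{-2}\}\cdot\{1_H,a,a^{-1}\}\cup\dots$, or one in terms of $\{1_H,a,a^3\}$ times a unit-translated element. The goal is for $f$ applied to that factorization to produce a different set, by the same kind of exponent comparison as in Case 1.

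The main obstacle is exactly this: finding an identity in $\Pf(H)$ that links the two cyclic submonoids $H_a$ and $H_{a^{-1}}$, where $f$ is explicitly known on each factor. The most promising route is to realize that $aX$ for $X\in\Pf(H_{a^{-1}})$ containing $a^{-1}$ lies in $\Pf$ of the group $\langle a\rangle$. Viewing $f$ on $\Pf(\langle a\rangle)$ then forces a consistency between the reversion on the positive and the negative parts. I would try the factorizations above first. Failing those, I would use Lemma \ref{quot}, namely that multiplicities of quotients are preserved, on $\{1_H,a,a^{-1}\}$ and $\{1_H,a,a^3\}$ to pin down the images and derive the contradiction.
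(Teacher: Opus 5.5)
There is a genuine gap: your Case~1 is correct, but Case~2 (both $a$ and $a^{-1}$ reversed) is never proved, and it cannot be handled with the kind of identity you use. Your Case~1 identity gives no contradiction there. With $f(\{1_H,a^{-1},a^{-3}\})=\{1_K,x^{-2},x^{-3}\}$, both sides map to $\{1_K,x^{-3},x^{-2},x,x^3\}$.

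This failure is structural. In Case~2, every value of $f$ your approach uses has the form $f(\{a^e:e\in E\})=\{x^e:e\in \max E+\min E-E\}$. This covers $\Pf(H_a)$, $\Pf(H_{a^{-1}})$ and all two-element sets $\{1_H,a^k\}$. The map $E\mapsto \max E+\min E-E$ is a homomorphism of the monoid of finite nonempty subsets of $(\Z,+)$. Hence no identity between products of such sets can produce a contradiction.

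Your other candidates $Y$ and $\{1_H,a,a^{-1}\}$ are of the same kind: their exponent sets satisfy $\max E+\min E\in E$, so they are compatible with this reflection. Also, your opening remark that $a^{-1}$ is reversed exactly when $a$ is not is unjustified, and in fact false, since the lemma shows that neither is reversed. So it cannot be used to make Case~2 vacuous.

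What is missing is a test set in $\Pf$ of $\langle a\rangle$ that mixes signs and has $\max E+\min E\notin E$. For such a set the reflected candidate would not contain $1_K$, and Lemma~\ref{card} then pins down its image. With this you only need the reversal of $a$ and $g(a^{-1})=x^{-1}$ from Lemma~\ref{pullbackunit}, and no case distinction on $a^{-1}$ at all.

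The paper takes $X=\{1_H,a,a^{-2}\}$ and the identity $X\cdot\{1_H,a,a^3\}=\{1_H,a\}^4\cdot\{1_H,a^{-1}\}^2$; both sides equal $\{a^{-2},\dots,a^4\}$. Applying $f$ gives $f(X)\cdot\{1_K,x^2,x^3\}=\{x^{-2},\dots,x^4\}$.
\begin{itemize}
\item Since $1_K$ lies in the second factor, $f(X)\subseteq\{x^{-2},\dots,x^4\}$.
\item Then $x^{-2}$ and $x^{-1}$ can only arise as elements of $f(X)$ times $1_K$, so $\{1_K,x^{-1},x^{-2}\}\subseteq f(X)$.
\item This is an equality because $|f(X)|=3$.
\item But then $x^4\notin f(X)\cdot\{1_K,x^2,x^3\}$, a contradiction.
\end{itemize}
This argument also makes your Case~1 superfluous.
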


\begin{proof}
  Set $x:=g(a)$, $X:=\{1_H,a,a^{-2}\}$ and suppose by way of contradiction that $a$ is reversed under $f$. Then the equation \[X\cdot\{1_H,a,a^3\}=\{1_H,a\}^4\cdot\{1_H,a^{-1}\}^2\] implies that \[f(X)\cdot\{1_K,x^2,x^3\}=\{1_K,x\}^4\cdot\{1_K,x^{-1}\}^2,\] from which it clearly follows that \[\{1_K,x,x^{-1},x^{-2}\}\subseteq f(X).\] However, by Lemma \ref{card}, we have $|f(X)|=3$, a contradiction.
\end{proof}

The following proposition shows how the pullback may fail to be an isomorphism. 

\begin{proposition} \label{onereversed}
    Let $H,K$ be monoids, $f:\Pf(H)\to\Pf(K)$ an isomorphism with pullback $g$ and $a,b\in H$. Then $g(ab)\neq g(a)g(b)$ if and only if precisely one of $a$ and $b$ is reversed under $f$.
\end{proposition}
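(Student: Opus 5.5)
The plan is to reduce to the rank-two situation and pin down $g$ on the lattice $\{a^nb^m\}$ using Lemmas \ref{quot}, \ref{core} and \ref{powersindependent}.

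\emph{Reduction.} If $g(ab)\neq g(a)g(b)$, then Corollary \ref{independent} makes $a,b$ independent. If instead exactly one of them is reversed, then both have infinite order, and $a^n=b^m$ with $n,m\neq 0$ is impossible, since reversal is inherited by powers. So in both directions I may assume $a,b$ independent, and hence $x:=g(a)$, $y:=g(b)$ independent.

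Let $M$ be the submonoid generated by $a,b$, so $M\cong\N_0^2$ whenever $a,b$ are non-units. Every $X\in\Pf(M)$ divides some $(\{1_H,a\}\{1_H,b\})^n$. By the same argument as for $H_a$, $f$ restricts to an isomorphism $\Pf(M)\to\Pf(M')$, where $M'$ is generated by $x,y$. Units of infinite order are never reversed (Lemma \ref{unitsnotrev}), so Theorem-type rigidity is what remains to be checked in the non-unit case. If, say, $a$ is a unit, I would run the argument of Lemma \ref{unitsnotrev} with $\{1_H,a^{-1}\}$ in place of $\{1_H,a,a^3\}$ to see that $g(ab)=xy$.

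\emph{Main step: forward direction.} Assume without loss of generality that $g(ab)=xy^{-1}$. By Lemma \ref{powersindependent} (in its proved form),
\[g(a^nb^m)\in\{x^ny^{-m},y^mx^{-n}\}\quad\text{for all }n,m\in\N.\]
Since all $g$-images of $M$ lie in $M'$, which consists of products $x^iy^j$ with $i,j\geq 0$, the case of negative exponents must be read inside $\mathsf q(K)$. I would first show that the sign pattern is uniform: either $g(a^nb^m)=x^ny^{-m}$ for all $n,m$, or $g(a^nb^m)=y^mx^{-n}$ for all $n,m$. For this I would compare $g(a^{n}b^{m})$ with $g(a\cdot a^{n-1}b^m)$ via Lemma \ref{core}, exactly as in the induction of Lemma \ref{powersindependent}.

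Next I would feed the test sets $X=\{1_H,a,a^3\}\cdot\{1_H,b^k\}$ and $X'=\{1_H,a\}\cdot\{1_H,b,b^3\}$ into Lemma \ref{quot}. Their images are $f(\{1_H,a,a^3\})\cdot\{1_K,y^k\}$ and $\{1_K,x\}\cdot f(\{1_H,b,b^3\})$, where each factor $f(\{1_H,a,a^3\})$ is either $\{1_K,x,x^3\}$ or $\{1_K,x^2,x^3\}$. For each of the four reversal patterns, I would count the multiplicity of the quotients $a^2b^k$, $a^3b^{-k}$-type elements in $X$. I would then compare with the multiplicity of their prescribed images $x^2y^{-k}$, $x^3y^{k}$, and so on, in the image set.

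The expected outcome is the following. If neither or both of $a,b$ are reversed, some quotient multiplicity fails to match, because reversal of exactly one factor is what turns $xy$ into $xy^{-1}$ up to a shift of $x^3$ or $y^3$. Concretely, one expects $g(a^nb^m)=x^{3-n}$-like shifts to be incompatible. This gives the forward implication.

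\emph{Converse.} Suppose exactly one of $a,b$ is reversed, say $a$, while $g(ab)=xy$. Then Lemma \ref{core} and Lemma \ref{powersindependent} applied in reverse (to $f^{-1}$) force $g(a^nb^m)=x^ny^m$ for all $n,m$. Applying $f$ to $\{1_H,a,a^3\}\cdot\{1_H,b\}$ then gives the set $\{1_K,x^2,x^3,y,x^2y,x^3y\}$. I would exhibit a quotient, for example $ab^{\pm1}$ or $a^2b$, whose multiplicity in the original set differs from that of its image, contradicting Lemma \ref{quot}.

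\emph{Main obstacle.} The obstacle is choosing test sets whose quotient multiplicities genuinely distinguish the cases. The naive choice $\{1_H,a,a^3\}\{1_H,b\}$ gives multiplicity $1$ for $ab$ and $a^2b$ in every scenario. I would first try larger sets such as $\{1_H,a,a^3\}\cdot\{1_H,b,b^3\}$ or $\{1_H,a,a^3\}\cdot\{1_H,ab\}$. In these, elements like $a^2b^2$ have multiplicity depending on orientation. Failing that, I would use an equation of the form $X\cdot\{1_H,a\}^k=Y\cdot\{1_H,b\}^l$, as in the proof of Lemma \ref{unitsnotrev}, and compare the resulting cardinalities.
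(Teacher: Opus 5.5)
\textbf{Main gap.} Every test set you propose is a product $A_0\cdot B_0$ of a set of powers of one element and a set of powers of an independent one: $\{1_H,a,a^3\}\cdot\{1_H,b^k\}$, $\{1_H,a\}\cdot\{1_H,b,b^3\}$, $\{1_H,a,a^3\}\cdot\{1_H,b,b^3\}$ and $\{1_H,a,a^3\}\cdot\{1_H,ab\}$. On such sets Lemma \ref{quot} can never detect reversal.

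Write $A_0=\{a^p:p\in P\}$ and $B_0=\{b^q:q\in Q\}$ (with $b$ replaced by $ab$ in the last example), and set $c_P(i):=|P\cap(P-i)|$. Every quotient of $A_0B_0$ is some $a^ib^j$, and its multiplicity is $c_P(i)c_Q(j)$. By Lemma \ref{core}, its image $g(a^ib^j)$ is $x^{\pm i}y^{\pm j}$ for suitable signs. Moreover $f(A_0B_0)=f(A_0)f(B_0)$, where $f(A_0)$ is either $\{x^p:p\in P\}$ or its reversal $\{x^{\max P-p}:p\in P\}$, and likewise for $B_0$. Since $c_P(-i)=c_P(i)=c_{\max P-P}(i)$, the image multiplicity is again $c_P(i)c_Q(j)$. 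This holds for every reversal pattern, and whether $g(ab)$ equals $xy$ or $xy^{-1}$. So the counts you plan to compare always agree, as you already observed for the naive choice. Your fallback via equations $X\{1_H,a\}^k=Y\{1_H,b\}^l$ is left unspecified.

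\textbf{Missing idea.} You need \emph{mixed} sets whose images are not a priori products: take $X=\{1_H,a,ab\}$ and $Y=\{1_H,b,ab\}$. Lemma \ref{quot} confines $f(X)$ and $f(Y)$ to $\{1_K,x^{\pm1},y^{\pm1},g(ab)^{\pm1}\}$. The identity $X\cdot Y=\{1_H,a\}\{1_H,ab\}\{1_H,b\}$ together with Lemma \ref{card} then forces
\[
\{f(X),f(Y)\}=\{\{1_K,x,y\},\{1_K,x,xy^{-1}\}\}\quad\text{if } g(ab)=xy^{-1},
\]
and $\{\{1_K,x,xy\},\{1_K,y,xy\}\}$ if $g(ab)=xy$. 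This ``swap'' is what breaks the symmetry.

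For instance, suppose $g(ab)=xy^{-1}$ and $f(Y)=\{1_K,x,y\}$. Then $ab^3$ has multiplicity $1$ in $\{1_H,b,b^3\}\cdot Y$. If $b$ were not reversed, $g(ab^3)\in\{xy^{-3},y^3x^{-1}\}$ would have multiplicity $2$ in $f(\{1_H,b,b^3\}\cdot Y)=\{1_K,y,y^3\}\cdot f(Y)$, so $b$ must be reversed. The analogous count for $a^3b$ in $\{1_H,a,a^3\}\cdot Y$ shows $a$ is not reversed. The converse is handled by the same kind of counts.

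\textbf{Reduction step.} It is not true that every $X\in\Pf(M)$ divides some $(\{1_H,a\}\{1_H,b\})^n$, and $f$ does not map $\Pf(M)$ into $\Pf(M')$. Precisely in the case of interest, $f(\{1_H,ab\})=\{1_K,xy^{-1}\}$, and $xy^{-1}\notin M'$ because $x,y$ are independent. If the restriction held, Lemma \ref{core} would force $g(ab)=xy$ for all $a,b$, and the proposition would be empty. You noticed this tension (``negative exponents must be read inside $\mathsf q(K)$'') but did not resolve it.

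\textbf{Unit subcase.} If $a$ is a unit and $b$ is reversed, the proposition requires $g(ab)\neq xy$; this is exactly how it is used in Lemma \ref{nothingreversed}. Showing $g(ab)=xy$ there would amount to proving Lemma \ref{nothingreversed} directly, which the argument of Lemma \ref{unitsnotrev} does not do. No separate treatment is needed anyway. With $x,y$ independent, the extra quotients $x^{-1},y^{-1},g(ab)^{-1}$ that units could contribute are excluded automatically, because $f(X)$ must lie in $\{1_K,x\}\{1_K,g(ab)\}\{1_K,y\}$.
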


\begin{proof}
    Set $x:=g(a), y:=g(b)$ and suppose first that $g(ab)\neq xy$. Then, by Corollary \ref{independent}, $a$ and $b$ are independent, whence $x$ and $y$ are independent as well and by Lemma \ref{core}, we can assume without loss of generality that $g(ab)=xy^{-1}$. We set $X:=\{1_H,a,ab\}$, $Y:=\{1_H,b,ab\}$ and observe that the quotients of $X$ and $Y$ are contained in the set \[\{a,a^{-1},b,b^{-1},ab,a^{-1}b^{-1}\}.\] Hence, by Lemma \ref{quot}, we obtain \[f(X),f(Y)\subseteq \{1_K,x,x^{-1},y,y^{-1},xy^{-1},yx^{-1}\}\] and from Lemma \ref{card}, it follows that $|f(X)|=|f(Y)|=3$. Moreover, since the equation \[X\cdot Y=\{1_H,a\}\cdot\{1_H,ab\}\cdot\{1_H,b\}\] implies that both $f(X)$ and $f(Y)$ divide \[\{1_K,x\}\cdot\{1_K,xy^{-1}\}\cdot\{1_K,y\},\] we conclude that \[f(X),f(Y)\subseteq \{1_K,x,y,xy^{-1}\}.\] 
    It is now straightforward to verify that \[f(X)\cdot f(Y)=\{1_K,x\}\cdot\{1_K,xy^{-1}\}\cdot\{1_K,y\}\] is only possible if $f(X)=\{1_K,x,y\}$ and $f(Y)=\{1_K,x,xy^{-1}\}$ or if $f(X)=\{1_K,x,xy^{-1}\}$ and $f(Y)=\{1_K,x,y\}$. Suppose first that $f(Y)=\{1_K,x,y\}$ and by way of contradiction, that $b$ is not reversed under $f$. Note that $ab^3$ is a quotient of \[\{1_H,b,b^3\}\cdot Y\] of multiplicity $1$, whereas by Lemmas \ref{core} and \ref{powersindependent}, we have $g(ab^3)\in\{xy^{-3},y^3x^{-1}\}$, whence $g(ab^3)$ is a quotient of \[\{1_K,y,y^3\}\cdot f(Y)\] of multiplicity $2$, contradicting Lemma \ref{quot}. Hence $b$ is reversed under $f$.
    In a similar fashion, we suppose by contradiction that $a$ is reversed under $f$. Then $a^3b$ is a quotient of \[\{1_H,a,a^3\}\cdot Y\] of multiplicity $2$, whereas $g(a^3b)$ is a quotient of \[\{1_K,x^2,x^3\}\cdot f(Y)\] of multiplicity $1$, another contradiction. If, on the other hand, we have $f(X)=\{1_K,x,y\}$, then a symmetric argument yields that $a$ is reversed under $f$ and $b$ is not. Hence we have shown that if $g(ab)\neq xy$, then precisely one of $a$ and $b$ is reversed under $f$. \\

    Conversely, suppose without loss of generality that $b$ is reversed under $f$ and $a$ is not. Moreover, we suppose towards a contradiction that $g(ab)=xy$. Using a similar approach as before, we argue that the quotients of $X$ and $Y$ are contained in \[\{a,a^{-1},b,b^{-1},ab,a^{-1}b^{-1}\},\] whence  \[f(X),f(Y)\subseteq \{1_K,x,x^{-1},y,y^{-1},xy,x^{-1}y^{-1}\}.\] Then the equations \[X\cdot Y=\{1_H,a\}\cdot\{1_H,ab\}\cdot\{1_H,b\}\] and \[f(X)\cdot f(Y)=\{1_K,x\}\cdot\{1_K,xy\}\cdot\{1_K,y\}\] imply that $f(X)=\{1_K,x,xy\}$ and $f(Y)=\{1_K,y,xy\}$ or that $f(X)=\{1_K,y,xy\}$ and $f(Y)=\{1_K,x,xy\}$. If $f(X)=\{1_K,x,xy\}$, then $ab^3$ is a quotient of \[\{1_H,b,b^3\}\cdot X\] of multiplicity $2$, whereas by Lemma \ref{core}, $g(ab^3)\in\{xy^3,xy^{-3},y^3x^{-1}\}$ is a quotient of \[\{1_K,y^2,y^3\}\cdot f(X)\] of multiplicity $1$, a contradiction. 
    Hence $f(X)=\{1_K,y,xy\}$ and $a^3b$ is a quotient of \[\{1_H,a,a^3\}\cdot X\] of multiplicity $1$, while $g(a^3b)\in\{x^3y,x^3y^{-1},yx^{-3}\}$ is a quotient of \[\{1_K,x,x^3\}\cdot f(X)\] of multiplicity $2$, another contradiction.
\end{proof}

We will now prove a much stronger version of Lemma \ref{unitsnotrev}.

\begin{lemma} \label{nothingreversed}
    Let $H,K$ be monoids and $f:\Pf(H)\to\Pf(K)$ an isomorphism with pullback $g$. If $H^\times\neq\{1_H\}$, then no element of $H$ is reversed under $f$.
\end{lemma}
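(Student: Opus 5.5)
The plan is to argue by contradiction. Suppose $H^\times\neq\{1_H\}$ and some $a\in H$ is reversed under $f$. Then $a$ has infinite order, since reversal is only defined for such elements. Fix a non-trivial unit $u\in H^\times$ and set $x:=g(a)$ and $y:=g(u)$. By Lemma \ref{pullbackunit}, $y\in K^\times$ with $g(u^{-1})=y^{-1}$.

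First we show that $u$ and $u^{-1}$ are not reversed. If $u$ has finite order, it is not reversed by definition. If $u$ has infinite order, this is Lemma \ref{unitsnotrev}, applied to $u$ and to $u^{-1}$. Since $a$ is reversed and $u$ is not, Proposition \ref{onereversed} gives $g(au)\neq xy$. Corollary \ref{independent} then forces $a$ and $u$ to be independent. In particular $u$ has infinite order, and $x,y$ are independent as well, since $g$ preserves the relations $a^n=u^m$. Set $z:=g(au)$. By Lemma \ref{core} (2), $z\in\{xy^{-1},yx^{-1}\}$.

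Next we decompose $a=(au)\cdot u^{-1}$ and apply Proposition \ref{onereversed} to the pair $au,u^{-1}$, where $u^{-1}$ is not reversed.
\begin{itemize}
\item If $au$ were not reversed, then neither factor is reversed, so $x=g(a)=zy^{-1}$. This gives $z=xy$, which is a contradiction.
\item Hence $au$ is reversed, and $x\neq zy^{-1}$. Lemma \ref{core} (2), applied to $au$ and $u^{-1}$, then gives $x\in\{zy,\;y^{-1}z^{-1}\}$.
\end{itemize}
(Here $au$ has infinite order, so Lemma \ref{core} applies; and $\ord(u^{-1})=\infty\geq 3$.)

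It remains to check the four combinations, each of which contradicts the independence of $x$ and $y$ or the infinite order of $x$:
\begin{itemize}
\item $z=xy^{-1}$ and $x=zy$: this gives $zy^{-1}=x=zy$, so $y^2=1_K$.
\item $z=xy^{-1}$ and $x=y^{-1}z^{-1}=x^{-1}$: this gives $x^2=1_K$.
\item $z=yx^{-1}$ and $x=zy=y^2x^{-1}$: this gives $x^2=y^2$.
\item $z=yx^{-1}$ and $x=y^{-1}z^{-1}=xy^{-2}$: this gives $y^2=1_K$.
\end{itemize}

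The only delicate point is making sure the case split is exhaustive. This needs the observation that $au$ must itself be reversed, which rules out the option $x=zy^{-1}$, and it needs Lemma \ref{core} to be applicable to both decompositions $a\cdot u$ and $(au)\cdot u^{-1}$.
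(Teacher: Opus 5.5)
Your four-case check fails in the first case. From $z=xy^{-1}$ one gets $x=zy$, not $x=zy^{-1}$, so ``$z=xy^{-1}$ and $x=zy$'' is one equation written twice and gives no contradiction. Likewise, $x\neq zy^{-1}$ is just a restatement of $z\neq xy$. Applying Lemma \ref{core} to $(au)\cdot u^{-1}$ therefore only eliminates $z=yx^{-1}$; the paper does this more quickly by noting that $yx^{-1}\in K$ would make $x$, hence $a$, a unit. The surviving configuration $g(au)=g(a)g(u)^{-1}$ is exactly where the paper's proof also lands. It is consistent with all the relations for products of two elements that you use, so more bookkeeping of the pullback cannot close the argument. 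The missing idea is to evaluate $f$ on a three-element set. Take $X=\{1_H,a,au\}$. The argument from the first half of the proof of Proposition \ref{onereversed}, together with the fact that $a$ (not $u$) is reversed, gives $f(X)=\{1_K,x,y\}$. Then the identity $X\cdot\{1_H,a,u^{-1}\}=\{1_H,a\}\cdot\{1_H,u^{-1}\}\cdot\{1_H,au\}$ forces $y\in\{1_K,x\}\cdot\{1_K,y^{-1}\}\cdot\{1_K,xy^{-1}\}$, which contradicts the independence of $x$ and $y$.

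The case where all non-trivial units are torsion is also not genuinely handled. You apply Proposition \ref{onereversed} to $a$ and a finite-order $u$, then invoke Corollary \ref{independent}. However, the proof of the ``if'' direction of Proposition \ref{onereversed} tacitly assumes that both elements have infinite order and are independent. It uses this to pin down $f(\{1_H,a,ab\})$ and $f(\{1_H,b,ab\})$ and to count multiplicities, and these steps break down for a torsion factor. Worse, Proposition \ref{torsionnontorsion} already gives $g(au)=g(a)g(u)$ for torsion $u$. So in this situation that direction of Proposition \ref{onereversed} is equivalent to ``no element is reversed'', which is the very case you are proving. The paper therefore gives a separate argument. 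Let $G=\langle u\rangle$ and use Claim 2 from the proof of Proposition \ref{torsionnontorsion}. The sets $\{1_H\}\cup aG\cup a^3G$ and $\{1_H\}\cup aG\cup a^2G\cup a^3G$ have the same product with $\{1_H,a\}$. Their images $\{1_K\}\cup x^2f(G)\cup x^3f(G)$ and $\{1_K\}\cup xf(G)\cup x^2f(G)\cup x^3f(G)$, however, have different products with $\{1_K,x\}$: only the second contains $xy$.
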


\begin{proof}
   Set $x:=g(a), y:=g(b)$, let $b\in H^\times$ be a non-trivial unit of $H$ and suppose by way of contradiction that there is an element $a\in H$ of infinite order that is reversed under $f$. We first consider the case, where $b$ has finite order. Let $G$ denote the cyclic group generated by $b$, $M$ the monoid generated by $a$ and $b$ and note that $f(G)=g(G)$ is the cyclic group generated by $y$.  We recall from the proof of Proposition \ref{torsionnontorsion} that for any $X\in\Pf(M)$, we have \[f(G)\cap f(X)=g(G\cap X).\] Moreover, we set \[X:=\{1_H\}\cup aG\cup a^3G\] and \[Y:=\{1_H\}\cup aG\cup a^2G\cup a^3G.\] Then the equations \[G\cdot X=G\cdot\{1_H,a,a^3\}\] and \[G\cdot Y=G\cdot\{1_H,a\}^3\] imply that \[f(G)\cdot f(X)=f(G)\cdot \{1_K,x^2,x^3\}\] and \[f(G)\cdot f(Y)=f(G)\cdot\{1_K,x\}^3.\] Hence, by Lemma \ref{card} and the fact that $\ord(b)=\ord(y)$, it follows that \[f(X)=\{1_K\}\cup x^2G\cup x^3G\] and \[f(Y)=\{1_K\}\cup xf(G)\cup x^2f(G)\cup x^3f(G).\] 
   However, the equation \[\{1_H,a\}\cdot X=\{1_H,a\}\cdot Y\] then implies that \[\{1_K,x\}\cdot f(X)=\{1_K,x\}\cdot f(Y),\] which, due to the fact that \[xy\in \{1_K,x\}\cdot f(Y)\] and \[xy\not\in \{1_K,x\}\cdot f(X),\] leads to a contradiction. \\

   Suppose on the other hand that $b$ has infinite order and set $X:=\{1_H,a,ab\}$. By Lemma \ref{unitsnotrev}, $b$ is not reversed under $f$, whence $g(ab)\neq xy$ by Proposition \ref{onereversed}. Similarly, since $a$ is reversed under $f$, it cannot be a unit of $H$, which implies that $x$ cannot be a unit of $K$. Hence by Lemma \ref{core}, we obtain $g(ab)=xy^{-1}$. In particular, $a$ and $b$ are independent elements by Corollary \ref{independent}, whence $x$ and $y$ are independent as well. By repeating the arguments in the first part of the proof of Proposition \ref{onereversed} and considering that $a$ is reversed under $f$, we infer that $f(X)=\{1_K,x,y\}$. Then from the equation \[X\cdot \{1_H,a,b^{-1}\}=\{1_H,a\}\cdot\{1_H,b^{-1}\}\cdot\{1_H,ab\},\] it follows that \[y\in f(X)\subseteq \{1_K,x\}\cdot\{1_K,y^{-1}\}\cdot\{1_K,xy^{-1}\},\] contradicting the fact that $x$ and $y$ are independent elements.
\end{proof}

As a consequence, we obtain a solution to the isomorphism problem for a large class of monoids.

\begin{theorem}\label{nonreducedpositive}
    Let $H,K$ be monoids and $f:\Pf(H)\to\Pf(K)$ an isomorphism with pullback $g$. If $H^\times\neq\{1_H\}$ or $K^\times\neq \{1_K\}$, then $g$ is an isomorphism. In particular, the isomorphism problem has a positive answer for the class of commutative, cancellative monoids with non-trivial unit group.
\end{theorem}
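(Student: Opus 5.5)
By symmetry we may assume $H^\times\neq\{1_H\}$. Indeed, if instead $K^\times\neq\{1_K\}$, then Lemma \ref{pullbackunit} gives $g(H^\times)=K^\times$, so $H^\times\neq\{1_H\}$ as well. Under this assumption, Lemma \ref{nothingreversed} tells us that no element of $H$ is reversed under $f$, that is, $H_{R,f}=\{1_H\}$.

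Now take arbitrary $a,b\in H$. By Proposition \ref{onereversed}, $g(ab)\neq g(a)g(b)$ can occur only if precisely one of $a$ and $b$ is reversed under $f$. Since neither is reversed, we get $g(ab)=g(a)g(b)$ for all $a,b\in H$. We also have $g(1_H)=1_K$ by definition, and $g$ is a bijection, as noted when the pullback was introduced. Hence $g$ is a monoid isomorphism $H\to K$.

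For the ``in particular'' statement, let $H,K$ be commutative, cancellative monoids with non-trivial unit group and suppose $\Pf(H)\simeq\Pf(K)$ via some $f$. If $H$ contains an element of infinite order, the argument above applies and the pullback of $f$ is an isomorphism $H\simeq K$. The standing assumption that monoids contain an element of infinite order needs care here. If $H$ has no element of infinite order, then $K$ has none either, because the pullback is a bijection that preserves orders. In that case every element is a unit of finite order, so $H$ and $K$ are torsion groups, and the positive answer from \cite{3Tr-Ya25} for torsion groups applies.

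The only step needing any thought is this bookkeeping about the standing hypothesis. Everything substantive is already contained in Lemma \ref{nothingreversed} and Proposition \ref{onereversed}.
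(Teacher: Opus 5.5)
Your proof is correct and follows the same approach as the paper: you use Lemma \ref{pullbackunit} to transfer a non-trivial unit between $H$ and $K$, Lemma \ref{nothingreversed} to rule out reversed elements, and Proposition \ref{onereversed} to get $g(ab)=g(a)g(b)$. Your remark on the case where the standing infinite-order hypothesis fails is a correct addition the paper does not spell out: $H$ and $K$ are then torsion groups, which are covered by the result of \cite{3Tr-Ya25}.
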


\begin{proof}
   Clearly, if one of $H$ or $K$ contains a non-trivial unit, so does the other. The assertion then follows from Proposition \ref{onereversed} and Lemma \ref{nothingreversed}.
\end{proof}

\section{The case of reduced monoids}

In this section, we prove our main theorem and fill in the gaps between Theorem \ref{nonreducedpositive} and \cite[Theorem 2]{Ra25}, which deals with the class of reduced valuation monoids. We start by investigating the sets $H_N$ and $H_R$, where $H$ and $K$ are reduced monoids and $f:\Pf(H)\to\Pf(K)$ is an isomorphism. It turns out that these sets are closely related to two subsemigroups of $H$, which provide a canonical decomposition of $H$ into a valuation submonoid and a subsemigroup with certain strong properties. 
Ultimately, this decomposition will allow us to describe the structure of $K$ completely in terms of the structure of $H$. Let $H$ be a reduced monoid and $a\in H$. We call $a$ a \textit{pseudo-unit} of $H$ if for all $b\in H$, either $ab^{-1}\in H$ or $ba^{-1}\in H$. We let $H_v$ denote the set of pseudo-units of $H$ and we set ${H_v}:=H\setminus H_v$. We clearly have $1_H\in H_v$ and \[H={H_v}^c\sqcup H_v.\] In particular, $H$ is a valuation monoid if and only if $H=H_v$ if and only if ${H_v}^c=\emptyset$.

\begin{lemma}\label{pseudo}
    Let $H$ be a reduced monoid. Then ${H_v}^c$ is a subsemigroup of $H$ and $H_v$ is a valuation submonoid of $H$. Moreover, we have ${H_v}^c\cdot \mathsf{q}(H_v)={H_v}^c$.
\end{lemma}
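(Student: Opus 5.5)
Throughout, work inside $\mathsf{q}(H)$ and use repeatedly the fact that, since $H$ is reduced, for $u\in\mathsf{q}(H)$ the conditions $u\in H$ and $u^{-1}\in H$ together force $u=1_H$. The proof has four steps. First, $H_v$ is a submonoid. Then ${H_v}^c$ is absorbing in the strong sense ${H_v}^c\cdot\mathsf{q}(H_v)\subseteq {H_v}^c$. From this the subsemigroup property follows, and finally $H_v$ is a valuation monoid.

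\textbf{Submonoid.} Let $a,a'\in H_v$ and $b\in H$; we must show that $aa'b^{-1}\in H$ or $b(aa')^{-1}\in H$. Apply the pseudo-unit property of $a$ to $b$. If $ab^{-1}\in H$, then $aa'b^{-1}=a'\cdot ab^{-1}\in H$. Otherwise $c:=ba^{-1}\in H$, and we apply the pseudo-unit property of $a'$ to $c$. This gives either $a'c^{-1}=aa'b^{-1}\in H$ or $ca'^{-1}=b(aa')^{-1}\in H$. Together with $1_H\in H_v$, this shows $H_v$ is a submonoid.

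\textbf{Absorption.} Let $c\in{H_v}^c$ and $u=pq^{-1}\in\mathsf{q}(H_v)$ with $p,q\in H_v$, and suppose $cu\in H$. Showing $cp\in{H_v}^c$ and $cq^{-1}\in{H_v}^c$ (whenever $cq^{-1}\in H$) suffices, by applying the two statements in turn.

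For $cp$: assume $cp\in H_v$. Choose $b\in H$ witnessing $c\notin H_v$, i.e.\ $cb^{-1}\notin H$ and $bc^{-1}\notin H$. Compare $cp$ with $b$. If $cpb^{-1}\in H$, compare $p$ with $cpb^{-1}$, using that $p\in H_v$: either $p\,(cpb^{-1})^{-1}=bc^{-1}\in H$, which is excluded, or $cpb^{-1}p^{-1}=cb^{-1}\in H$, also excluded. Hence $b(cp)^{-1}\in H$, so $b=cpd$ for some $d\in H$. Then $bc^{-1}=pd\in H$, a contradiction.

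For $cq^{-1}=:e\in H$: assume $e\in H_v$. Then $c=eq$ lies in $H_v$ because $H_v$ is a submonoid, a contradiction.

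Thus ${H_v}^c\cdot\mathsf{q}(H_v)\subseteq{H_v}^c$, with both sides understood inside $H$. The reverse inclusion is immediate via $u=1_H$.

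\textbf{Subsemigroup.} Let $c,c'\in{H_v}^c$ and suppose $cc'\in H_v$. Use the witness $b$ for $c\notin H_v$ and compare $cc'$ with $b$.
\begin{itemize}
\item If $cc'b^{-1}\in H$: compare $b$ with $c$ from the side of $cc'$. Since $cc'\in H_v$, for $h:=cc'b^{-1}$ either $cc'h^{-1}=b\in H$, which is no information, or a factor of $cc'$ divides appropriately. This is delicate.
\end{itemize}

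A cleaner route is to show that $H_v$ is divisor-closed: if $xy\in H_v$ with $x,y\in H$, then $x\in H_v$. This gives the subsemigroup property at once. For $b\in H$, apply the pseudo-unit property of $xy$ to $by$. If $xy(by)^{-1}=xb^{-1}\in H$, we are done. Otherwise $by(xy)^{-1}=bx^{-1}\in H$, and again we are done. So $x\in H_v$, and divisor-closedness gives that ${H_v}^c$ is closed under multiplication.

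\textbf{Valuation.} Let $a,a'\in H_v$. Then $a a'^{-1}\in H$ or $a'a^{-1}\in H$. Whichever quotient lies in $H$ divides an element of $H_v$ (namely $a$ or $a'$), so by divisor-closedness it lies in $H_v$. Hence $H_v$ is a valuation monoid.

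The main obstacle I expect is the absorption step, specifically ensuring the witness argument for $cp$ handles every case correctly. I would verify it carefully, possibly streamlining it via divisor-closedness (if $cp\in H_v$ then $c\in H_v$ directly), which in fact makes the $cp$ case immediate.
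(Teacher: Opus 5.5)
Your submonoid argument is the same as the paper's. Your divisor-closedness lemma (if $xy\in H_v$ with $x,y\in H$, then $x,y\in H_v$) is correct, and it gives the subsemigroup and valuation properties more cleanly than the paper's direct computations. The gap is in the absorption step. There you \emph{assume} $cu\in H$, and in the second part that $cq^{-1}\in H$, and then conclude ${H_v}^c\cdot\mathsf{q}(H_v)\subseteq{H_v}^c$ ``with both sides understood inside $H$''. That only proves $\bigl({H_v}^c\cdot\mathsf{q}(H_v)\bigr)\cap H\subseteq{H_v}^c$.

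But ${H_v}^c\cdot\mathsf{q}(H_v)$ is a subset of $\mathsf{q}(H)$, and the lemma asserts that it \emph{equals} ${H_v}^c$. Its real content is that $cq^{-1}\in H$ for every $c\in{H_v}^c$ and $q\in H_v$, i.e.\ every element of $H_v$ divides every element of ${H_v}^c$ in $H$. This is exactly how the lemma is used later. In the proof of Theorem~\ref{main}, the paper concludes $ba^{-1}\in{H_v}^c\subseteq K$ for $a\in\tilde{H}$, and in the Claim there it uses $a(X\cap{H_v}^c)\subseteq{H_v}^c$ for $a\in\mathsf{q}(H_v)$. Your weakened version gives neither.

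The missing step is short with your own tools. Let $c\in{H_v}^c$ and $q\in H_v$, and apply the pseudo-unit property of $q$ to $c\in H$. If $qc^{-1}\in H$, then $q=c\cdot(qc^{-1})$ factors an element of $H_v$ inside $H$, so divisor-closedness forces $c\in H_v$, a contradiction. Hence $cq^{-1}\in H$, and your submonoid argument then shows $cq^{-1}\in{H_v}^c$.

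Now let $u=pq^{-1}$ with $p,q\in H_v$. Divisor-closedness gives $cp\in{H_v}^c$, and applying the previous step to $cp$ in place of $c$ yields $cu=(cp)q^{-1}\in{H_v}^c$, with no hypothesis $cu\in H$. This is essentially the paper's argument, which combines the pseudo-unit property of $b\in H_v$ with ${H_v}^c\cdot H={H_v}^c$. You should also delete the abandoned ``this is delicate'' bullet.
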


\begin{proof}
If ${H_v}^c=\emptyset$, then all the assertions are vacuously true, whence we assume that ${H_v}^c\neq\emptyset$. We start by showing that $H_v$ is a valuation submonoid of $H$. Let $a,b\in H_v$ and $c\in H$. We aim to show that $abc^{-1}\in H$ or $c(ab)^{-1}\in H$. If $ac^{-1}\in H$, then clearly $abc^{-1}\in H$. Otherwise, since $a\in H_v$, we have $ca^{-1}\in H$. Then, from $b\in H_v$, it follows that $abc^{-1}\in H$ or $c(ab)^{-1}\in H$, which proves that $H_v$ is a submonoid of $H$. Clearly, we also either have $ab^{-1}\in H$ or $ba^{-1}\in H$ and we assume without loss of generality that $ab^{-1}\in H$ with the goal of showing that $a(bc)^{-1}\in H$ or $bca^{-1}\in H$. However, this follows from the fact that $a\in H_v$ and $bc\in H$. Hence $ab^{-1}\in H_v$ and $H_v$ is a valuation monoid. \\

Let now $a,b\in {H_v}^c$ and suppose by way of contradiction that $ab\in H_v$. By definition, there is $c\in H$ such that neither $ac^{-1}\in H$ nor $ca^{-1}\in H$. If $c(ab)^{-1}\in H$, then clearly $ca^{-1}\in H$, a contradiction. Hence, since $ab\in H_v$, we have $abc^{-1}\in H$ and $a^2bc^{-1}\in H$. But then, either $ca^{-1}=(ab)(a^2bc^{-1})^{-1}\in H$ or $ac^{-1}=(a^2bc^{-1})(ab)^{-1}\in H$, another contradiction. Thus, $ab\in {H_v}^c$ and ${H_v}^c$ is a subsemigroup of $H$.\\

We continue by showing that ${H_v}^c\cdot \mathsf{q}(H_v)={H_v}^c$. Suppose that $a\in {H_v}^c$, $b\in H_v$ and by way of contradiction, that $ab\in H_v$. Since $H_v$ is a valuation monoid, we then infer that either $a=(ab)b^{-1}\in H_v$, contradicting our assumption, or $a^{-1}\in H_v$, which is a contradiction to $H$ being reduced. Thus, ${H_v}^c\cdot H_v={H_v}^c$. Similarly, suppose that $ba^{-1}\in H$. Since we already showed that ${H_v}^c\cdot H={H_v}^c$, we obtain $b=a(ba^{-1})\in {H_v}^c$, contradciting our assumption. Hence $ab^{-1}\in H$ and since $H_v$ is a monoid, we conclude that $ab^{-1}\in{H_v}^c$. This proves that ${H_v}^c\cdot (H_v)^{-1}={H_v}^c$, from which we infer that ${H_v}^c\cdot \mathsf{q}(H_v)={H_v}^c$.
\end{proof}

The following lemma shows that the sets $H_N$ and $H_R$ exhibit quite similar properties to those of $H_v$ and ${H_v}^c$.

\begin{lemma} \label{split monoids}
    Let $H,K$ be reduced monoids and $f:\Pf(H)\to\Pf(K)$ an isomorphism with pullback $g$. Then $H_N$ is a subsemigroup of $H$, $H_R$ is a submonoid of $H$ and $H_R\subseteq H_v$.
\end{lemma}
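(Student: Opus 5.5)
The first two assertions should follow from Proposition \ref{onereversed} alone, so I would do them first. Since $H$ is reduced and cancellative, every element of finite order is a unit, so every $a\in H\setminus\{1_H\}$ has infinite order and "reversed under $f$" is defined for it. Throughout, write $x:=g(a)$ and $y:=g(b)$.

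For closure, let $a,b\in H\setminus\{1_H\}$ be both reversed or both non-reversed. By Proposition \ref{onereversed}, $g(ab)=xy$. Since reversal is inherited by powers, $a^2$ lies in the same class as $a$, and Proposition \ref{onereversed} applied to $a^2$ and $b$ gives $g(a^2b)=x^2y$. Now suppose $ab$ were in the other class from $a$. Then exactly one of $a$ and $ab$ is reversed, so Proposition \ref{onereversed} gives
\[g(a^2b)=g(a\cdot ab)\neq g(a)g(ab)=x\cdot xy=x^2y,\]
a contradiction. Hence $ab$ lies in the same class as $a$. Because $H$ is reduced, $ab\neq 1_H$, so $H_N$ is closed under multiplication. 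Together with $1_H\in H_R$, this shows $H_R$ is a submonoid.

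For $H_R\subseteq H_v$, take $a\in H_R\setminus\{1_H\}$ and $c\in H$, set $z:=g(c)$, and suppose that neither $ac^{-1}$ nor $ca^{-1}$ lies in $H$. I would split according to whether $c$ is reversed.

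If $c\in H_N$, then by Proposition \ref{onereversed} and Lemma \ref{core}, $g(ac)\in\{xz^{-1},zx^{-1}\}$. In particular $x$ and $z$ are comparable by divisibility in $K$, say $w:=xz^{-1}\in K$, and $a,c$ are independent by Corollary \ref{independent}. The aim is to transport the identity
\[\{1_K,z\}\cdot\{1_K,w\}=\{1_K,z,w,x\}\]
back through $f^{-1}$ and use Lemmas \ref{card} and \ref{quot} to show that $a$ is a quotient of a set whose only admissible quotients force $ac^{-1}\in H$ or $ca^{-1}\in H$. The set $\{1_H,c\}\cdot\{1_H,ac\}$ has quotient structure incompatible with the independence of $a$ and $c$ unless such a divisibility holds. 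If instead $c\in H_R$, I would reduce to the first case by multiplying $c$ by a suitable element of $H_N$: the product lies in $H_N$ by the first part, and non-comparability with $a$ should be preserved by the choice of that element.

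The main obstacle is the case $c\in H_N$. Proposition \ref{onereversed} applied naively to the pair $(w,z)$ under $f^{-1}$ only returns consistent information, since it just recovers $a=(ac)c^{-1}$. So a genuinely new set identity, in the style of the proof of Proposition \ref{onereversed} with three-element sets like $\{1_H,a,ac\}$ together with $\{1_H,a,a^3\}$ versus $\{1_K,x^2,x^3\}$, is needed to exploit reversal. That is where I would concentrate the effort.
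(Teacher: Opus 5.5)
Your proof that $H_N$ and $H_R$ are closed under multiplication is correct and is essentially the paper's argument. The paper compares $g(ab\cdot b)$ with $g(a)g(b)^2$; you compare $g(a\cdot ab)$ with $g(a)^2g(b)$, and you need only the ``if and only if'' of Proposition~\ref{onereversed}.

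The gap is the inclusion $H_R\subseteq H_v$, for which you do not supply a working argument. The route you sketch for $c\in H_N$ cannot succeed. Pulling $\{1_K,z\}\cdot\{1_K,w\}=\{1_K,z,w,x\}$ back through $f^{-1}$ gives $\{1_H,c\}\cdot\{1_H,ac\}=\{1_H,c,ac,ac^2\}$. In this set $a$ is genuinely a quotient of multiplicity $1$ (via $a\cdot c=ac$), exactly as $x$ is in $\{1_K,z,w,x\}$, so Lemmas~\ref{card} and~\ref{quot} produce no contradiction. Your reduction of the case $c\in H_R$ is also unsound. The first part only gives $H_NH_N\subseteq H_N$ and $H_RH_R\subseteq H_R$; it says nothing about mixed products, which can be reversed. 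Take the isomorphism $X\mapsto aX$ of \cite{Ra25} between $H=\{(m,n)\in\Z^2:n>0\}\cup(\N_0\times\{0\})$ and $K=\{(m,n)\in\Z^2:n<0\}\cup(\N_0\times\{0\})$. Its reversed elements are exactly those of $H\setminus K$, so $H_N=\N\times\{0\}$ while $(1,0)+(0,1)\in H_R$. Moreover, $H_N$ may be empty altogether.

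The missing idea is a direct test that needs no case split on $c$. Put $X:=\{1_H,a,c\}$ and assume $ac^{-1},ca^{-1}\notin H$. Since $H$ is reduced, the only quotients of $X$ are $a$ and $c$, so Lemmas~\ref{card} and~\ref{quot} force $f(X)=\{1_K,x,z\}$. Since $a$ is reversed, $\{1_H,a,a^3\}\cdot X=\{1_H,a,a^2,a^3,a^4,c,ac,a^3c\}$ is mapped to $\{1_K,x^2,x^3\}\cdot\{1_K,x,z\}=\{1_K,x,x^2,x^3,x^4,z,x^2z,x^3z\}$. In the former set, $a^2c$ is a quotient of multiplicity $1$ (only $a\cdot a^2c=a^3c$). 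Lemma~\ref{core} allows three values for $g(a^2c)$: $x^2z$, $x^2z^{-1}$, $zx^{-2}$. Each of these is a quotient of the latter set of multiplicity $2$, contradicting Lemma~\ref{quot}. (The paper's text writes $g(a^3b)$ at this step; it must be $a^2b$, since $x^3z$ has multiplicity only $1$.)
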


\begin{proof}
    Let $a,b\in H$ and set $x:=g(a)$ and $y:=g(b)$. We first assume that $H_N\neq\emptyset$, $a,b\in H_N$ and aim to show that $ab\in H_N$. Suppose towards a contradiction that $ab\in H_R$. Since $a\neq 1_H\neq b$ and $H$ is reduced, we have $ab\neq 1_H$, whence $ab$ is reversed under $f$. Note that $b^2\in H_N$, which, together with Proposition \ref{onereversed} implies that $g(ab^2)=xy^2$. Moreover, since $a,b\in H_N$, we have $g(ab)=xy$. However, since $ab$ is reversed under $f$, we also obtain \[g((ab)\cdot b)\in\{x,x^{-1}\},\] a contradiction. Hence $H_N$ is a subsemigroup of $H$ and by a symmetrical argument, $H_R\setminus\{1_H\}$ is also a subsemigroup of $H$, whence $H_R$ is a submonoid of $H$.\\

    It remains to show that $H_R\subseteq H_v$. This is obvious if $H_R=\{1_H\}$, whence we suppose that $H_R$ is non-trivial. Let $a\in H_R\setminus\{1_H\}$, $b\in H\setminus \{1_H\}$ and set $X:=\{1_K,a,b\}$. Then, since $H$ is reduced, the set of quotients of $X$ is contained in \[\{a,b,ab^{-1},ba^{-1}\}.\] Suppose towards a contradiction that neither $ab^{-1}\in H$, nor $ba^{-1}\in H$. Then \[f(X)=\{1_K,x,y\}\] by Lemmas \ref{card} and \ref{quot} and $a^2b$ is a quotient of \[\{1_H,a,a^3\}\cdot X\] of multiplicity $1$. However, since $g(a^3b)\in\{x^3y,x^3y^{-1},yx^{-3}\}$ by Lemma \ref{core}, the element $g(a^3b)$ is a quotient of \[\{1_K,x^2,x^3\}\cdot f(X)\] of multiplicity $2$, contradicting Lemma \ref{quot}. Hence either $ab^{-1}\in H$ or $ba^{-1}\in H$ and $H_R\subseteq H_v$, proving our assertion.

\end{proof}

For two monoids $H,K$ and an isomorphism $f:\Pf(H)\to \Pf(K)$, we are now able to describe $K$ as a submonoid of $\mathsf{q}(H)$ in terms of the sets $H_N$ and $H_R$.

\begin{proposition}\label{decomposition}
    Let $H,K$ be reduced monoids and $f:\Pf(H)\to\Pf(K)$ an isomorphism with pullback $g$. Then $H_N\sqcup {H_R}^{-1}$ is a submonoid of $\mathsf{q}(H)$ and $K\simeq H_N\sqcup {H_R}^{-1}$.
\end{proposition}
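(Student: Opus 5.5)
Define $\varphi:H\to\mathsf{q}(K)$ by twisting the pullback on reversed elements: $\varphi(a)=g(a)$ for $a\in H_N\cup\{1_H\}$, and $\varphi(a)=g(a)^{-1}$ for $a\in H_R$. The plan is to show that $\varphi$ is multiplicative, extend it to an injective homomorphism $\Phi:\mathsf{q}(H)\to\mathsf{q}(K)$, and then check that $\Phi$ carries $H_N\sqcup H_R^{-1}$ bijectively onto $K$. That simultaneously gives that $H_N\sqcup H_R^{-1}=\Phi^{-1}(K)$ is a submonoid of $\mathsf{q}(H)$ and that it is isomorphic to $K$. The sum is disjoint, since $H_N\subseteq H$ and $H_R^{-1}\cap H=\{1_H\}$ because $H$ is reduced.

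\textbf{Multiplicativity of $\varphi$.} Let $a,b\in H$ and split into cases according to Proposition \ref{onereversed}.
\begin{itemize}
\item If $a,b\in H_N$, then $ab\in H_N$ by Lemma \ref{split monoids}, and $g(ab)=g(a)g(b)$.
\item If $a,b\in H_R$, then $ab\in H_R$, and $g(ab)=g(a)g(b)$, so inverting gives $\varphi(ab)=\varphi(a)\varphi(b)$.
\item If $a\in H_N$ and $b\in H_R\setminus\{1_H\}$, then $g(ab)\in\{g(a)g(b)^{-1},\,g(b)g(a)^{-1}\}$ by Lemma \ref{core}. Which option occurs should be decided by whether $ab$ lies in $H_N$ or $H_R$.
\end{itemize}
For the mixed case, the key input is the proof of Proposition \ref{onereversed}. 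There, assuming $g(ab)=xy^{-1}$, the element singled out as reversed was $b$, the one whose image appears with exponent $-1$. I would use this to show: if $ab\in H_N$ then $g(ab)=g(a)g(b)^{-1}$, and if $ab\in H_R$ then $g(ab)=g(b)g(a)^{-1}$.

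To decide this, apply Proposition \ref{onereversed} to the pair $(ab,b)$, with $b\in H_R$.
\begin{itemize}
\item If $ab\in H_N$, exactly one of the pair is reversed, so $g(ab\cdot b)\neq g(ab)g(b)$. Comparing with Lemma \ref{powersindependent}, which gives $g(ab^2)\in\{xy^{-2},y^2x^{-1}\}$, forces the consistent choice $g(ab)=xy^{-1}$.
\item If $ab\in H_R$, then $g(ab^2)=g(ab)g(b)$. This forces $g(ab)=yx^{-1}$, since $xy^{-1}\cdot y=x\ne g(ab^2)$ by independence (Corollary \ref{independent}).
\end{itemize}
In both subcases one checks $\varphi(ab)=\varphi(a)\varphi(b)$: in the first, $xy^{-1}=\varphi(a)\varphi(b)$; in the second, $\varphi(ab)=(yx^{-1})^{-1}=xy^{-1}$. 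So $\varphi$ is a monoid homomorphism $H\to\mathsf{q}(K)$ and extends uniquely to a group homomorphism $\Phi:\mathsf{q}(H)\to\mathsf{q}(K)$.

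\textbf{Image and injectivity.} By construction $\Phi(H_N)=g(H_N)$ and $\Phi(H_R^{-1})=g(H_R)$, so $\Phi(H_N\sqcup H_R^{-1})=g(H)=K$, using that $g$ is a bijection. Since $g$ is injective, $\Phi$ is injective on $H_N\sqcup H_R^{-1}$; for this, note that $g(H_N)\cap g(H_R)=\{1_K\}$ only at $1$.

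To conclude that $\Phi^{-1}(K)\cap(\ldots)$ is closed under multiplication, it suffices to show that $\Phi$ is injective on $\mathsf{q}(H)$. Then $H_N\sqcup H_R^{-1}=\Phi^{-1}(K)$ is a submonoid and $\Phi$ restricts to an isomorphism onto $K$. For injectivity, run the same construction symmetrically with $f^{-1}$, using $K_R=g(H_R)$. This yields $\Psi:\mathsf{q}(K)\to\mathsf{q}(H)$, and I expect $\Psi\circ\Phi$ to be the identity on the generators of $H$, hence on $\mathsf{q}(H)$.

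\textbf{Main obstacle.} The delicate point is the mixed case: determining which of the two options from Lemma \ref{core} occurs. This requires carefully rerunning the quotient-multiplicity argument of Proposition \ref{onereversed}, or the $(ab,b)$ trick above, taking care that every element involved is nontrivial and that the independence hypotheses are met.
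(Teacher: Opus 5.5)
Your proof is correct, but it takes a different route from the paper's.

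\textbf{The paper's route.} The paper first proves directly that $H_N\sqcup H_R^{-1}$ is closed under multiplication. It uses the comparability $H_R\subseteq H_v$ from Lemma \ref{split monoids}: for $a\in H_N$ and $b\in H_R\setminus\{1_H\}$, one of $ab^{-1},ba^{-1}$ lies in $H$, and the (sub)semigroup properties then place it in $H_N$ or $H_R$ respectively. The paper then defines your map only on $H_N\sqcup H_R^{-1}$ and checks multiplicativity by hand. In the mixed case it applies Proposition \ref{onereversed} and Lemma \ref{core} to $f^{-1}$ at $g(a),g(b)$. This shows that $g^{-1}(g(a)g(b))$ is whichever of $ab^{-1},ba^{-1}$ lies in $H$.

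\textbf{Your route.} Your $(ab,b)$ argument establishes the same identities from the $H$ side. Setting $c=ab$, your identity $g(a)=g(ab)g(b)$ for $ab\in H_N$ is the paper's $g(cb^{-1})=g(c)g(b)$. Packaging these as multiplicativity of $\varphi$ on all of $H$ lets you extend to $\mathsf{q}(H)$ and invert via the symmetric construction for $f^{-1}$. You then get the submonoid property for free as $\Phi^{-1}(K)$. In particular, you never need $H_R\subseteq H_v$.

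The cost is the second construction for $f^{-1}$. However, the check $\Psi\circ\Phi=\mathrm{id}$ on $H$ follows at once from $K_N=g(H_N)$ and $K_R=g(H_R)$, so your ``I expect'' can be stated as a fact.

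\textbf{Small corrections.}
\begin{itemize}
\item Your motivating reading of the proof of Proposition \ref{onereversed} is wrong. There, the normalization $g(ab)=xy^{-1}$ does not single out $b$ as the reversed factor. The case $f(Y)=\{1_K,x,y\}$ gives $b$ reversed, while the case $f(X)=\{1_K,x,y\}$ gives $a$ reversed. Your own analysis also yields $g(ab)=yx^{-1}$ when $ab\in H_R$. Nothing later in your argument depends on this remark.
\item In the subcase $ab\in H_N$, make explicit that Lemma \ref{core} applied to the pair $(ab,b)$ would give $g(ab^2)\in\{x^{-1},x\}$ if $g(ab)=yx^{-1}$. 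This is impossible, because $K$ is reduced and $g$ is injective.
\item $g(H_N)\cap g(H_R)$ is empty, not $\{1_K\}$, since $1_H\in H_R$.
\end{itemize}
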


\begin{proof}
  Let $a,b\in H$ and set $x:=g(a)$ and $y:=g(b)$. We start by showing that $H_N\sqcup {H_R}^{-1}$ is a submonoid of $\mathsf{q}(H)$. By Lemma \ref{split monoids}, both $H_N$ and ${H_R}^{-1}$ are subsemigroups of $\mathsf{q}(H)$. Hence we can assume that $H_N\neq \emptyset$ and $H_R\neq\{1_H\}$ with the aim to show that $ab^{-1}\in H_N\sqcup {H_R}^{-1}$, where $a\in H_N$ and $b\in H_R\setminus\{1_H\}$. By Lemma \ref{split monoids}, we have $H_R\subseteq H_v$, whence either $ab^{-1}\in H$ or $ba^{-1}\in H$. Suppose first that $ab^{-1}\in H$. Then, since $H_R$ is a submonoid of $H$ by Lemma \ref{split monoids}, it follows that $ab^{-1}\in H_N\subseteq H_N \sqcup {H_R}^{-1}$.
   If, on the other hand, $ba^{-1}\in H$, then $ba^{-1}\in H_R$ due to the fact that $H_N$ is a subsemigroup of $H$. Hence $ab^{-1}\in H_N\sqcup {H_R}^{-1}$, as desired. \\

  We now define the map $h: H_N \sqcup {H_R}^{-1}\to K$ in the following way. If $a\in H_N$, then $h(a)=g(a)$ and $h(a)=g(a^{-1})$ otherwise. It is straightforward to verify that $h(1_H)=1_K$ and that $h$ is bijective. By Lemma \ref{split monoids}, both $H_N$ and $H_R$ are subsemigroups of $H$, whence by Proposition \ref{onereversed}, we have \[h(ab)=g(ab)=g(a)g(b)=h(a)h(b)\] whenever $a,b\in H_N$. Similarly, if $a,b\in H_R\setminus\{1_H\}$, then \[h(a^{-1}b^{-1})=g(ab)=g(a)g(b)=h(a^{-1})h(b^{-1}).\] If, on the other hand, $a\in H_N$ and $b\in H_R\setminus\{1_H\}$, we recall from the first part of the proof that either $ab^{-1}\in H$, in which case $ab^{-1}\in H_N$, or $ba^{-1}\in H$, in which case $ba^{-1}\in H_R$. Moreover, since $y$ is reversed under $f^{-1}$ and $x$ is not, we obtain \[g^{-1}(xy)\in\{ab^{-1},ba^{-1}\}\] by Proposition \ref{onereversed}. Then, if $ab^{-1}\in H\cap H_N$ and $ba^{-1}\not\in H$, we have\[h(ab^{-1})=g(ab^{-1})=xy=g(a)g(b)=h(a)h(b^{-1}).\] 
  Otherwise, we have $ba^{-1}\in H\cap H_R$ and $ab^{-1}\not\in H$, from which it follows that \[h(ab^{-1})=g(ba^{-1})=xy=g(a)g(b)=h(a)h(b^{-1}).\] Hence $h$ is an isomorphism and $K\simeq H_N\sqcup {H_R}^{-1}$, as desired.

\end{proof}

We are now in a position to prove our main theorem.

\begin{theorem}\label{main}
    Let $H,K$ be commutative and cancellative monoids. Then $\Pf(H)\simeq\Pf(K)$ if and only if one of the two following statements hold. \begin{enumerate}
        \item $H$ is isomorphic to $K$.
        \item $H$ and $K$ are reduced and $K$ is isomorphic to a submonoid of $\mathsf{q}(H)$ of the form ${H_v}^c\sqcup \tilde{H}$, where $\tilde{H}$ is a reduced valuation monoid with $\mathsf{q}(H_v)=\mathsf{q}(\tilde{H})$. Here, $H_v$ denotes the monoid of pseudo-units of $H$ and ${H_v}^c$ its complement in $H$.
    \end{enumerate}
\end{theorem}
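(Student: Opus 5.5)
The proof splits into the two directions. For ``only if'', let $f:\Pf(H)\to\Pf(K)$ be an isomorphism. If $H^\times\neq\{1_H\}$ or $K^\times\neq\{1_K\}$, Theorem \ref{nonreducedpositive} gives $H\simeq K$, which is (1). So assume both are reduced. By Proposition \ref{decomposition}, $K\simeq H_N\sqcup {H_R}^{-1}$.

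The task is to rewrite this monoid in the form ${H_v}^c\sqcup\tilde H$. By Lemma \ref{split monoids}, $H_R\subseteq H_v$, so ${H_v}^c\subseteq H_N$. Write $H_N={H_v}^c\sqcup (H_N\cap H_v)$ and set $\tilde H:=(H_N\cap H_v)\sqcup {H_R}^{-1}$. Then $H_N\sqcup {H_R}^{-1}={H_v}^c\sqcup\tilde H$, and four things about $\tilde H$ remain to be checked.
\begin{enumerate}
\item $\tilde H$ is a submonoid. Products of two elements of $H_N\cap H_v$ stay in $H_N\cap H_v$, since $H_N$ is a subsemigroup and $H_v$ a monoid. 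Products of two elements of ${H_R}^{-1}$ stay there. For mixed products $ab^{-1}$ with $a\in H_N\cap H_v$ and $b\in H_R$, we have $ab^{-1}\in H_v\cup H_v^{-1}$ because $H_v$ is a valuation monoid. The argument of Proposition \ref{decomposition} then places $ab^{-1}$ in $H_N$ or in ${H_R}^{-1}$, and it must be the part lying in $H_v$ or $H_v^{-1}$.
\item $\tilde H$ is a valuation monoid. Note $\tilde H\subseteq \mathsf{q}(H_v)$. For $x\in\mathsf{q}(H_v)$, either $x$ or $x^{-1}$ lies in $H_v$. Each element of $H_v$ is either in $H_N\cap H_v$, hence in $\tilde H$, or in $H_R$, in which case its inverse lies in $\tilde H$.
\item $\tilde H$ is reduced. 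This holds because $H$ is reduced and $H_N\cap H_R=\emptyset$.
\item $\mathsf{q}(\tilde H)=\mathsf{q}(H_v)$. This holds because $H_v\subseteq \tilde H\cup\tilde H^{-1}$.
\end{enumerate}

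For ``if'', case (1) is the induced isomorphism described in the introduction. For (2), first note that $M:={H_v}^c\sqcup\tilde H$ is a monoid, using ${H_v}^c\cdot\mathsf{q}(H_v)={H_v}^c$ from Lemma \ref{pseudo}. The plan is to construct an isomorphism $\Pf(H)\to\Pf(M)$ that generalizes \cite[Theorem 2]{Ra25}, where $H$ is itself a valuation monoid. In that result, $H_v$ and $\tilde H$ are two reduced valuation monoids with the same quotient group $G=\mathsf{q}(H_v)$, and there is an isomorphism $\Phi:\Pf(H_v)\to\Pf(\tilde H)$. I expect $\Phi$ to be built by reversing each finite set along its $H_v$-maximal element and reinterpreting it in $\tilde H$. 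Concretely, one can use $X\mapsto m^{-1}X$ applied to the parts of $X$ according to the orders, in analogy with $\rev$.

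To extend $\Phi$ to $H$, decompose $X\in\Pf(H)$ as $X_v\sqcup X^c$ with $X_v=X\cap H_v$ and $X^c=X\cap {H_v}^c$, and define $F(X):=\Phi(X_v)\cup m_X^{\epsilon}X^c$ for a suitable correcting factor $m_X\in\mathsf{q}(H_v)$ arising from $\Phi$. By ${H_v}^c\mathsf{q}(H_v)={H_v}^c$, the set $F(X)$ lies in $M$.

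\textbf{Main obstacle.} The hardest step is choosing this correction so that $F$ is multiplicative and bijective. If the explicit formula from \cite{Ra25} is ``$X\mapsto (\max X)\,X^{-1}$'' with respect to the valuation order, the natural candidate is $F(X)=\Phi(X_v)\cup (\max X_v)\,X^c$. Multiplicativity then reduces to three facts: $\max(X_vY_v)=\max X_v\max Y_v$; the $H_v$-part of $XY$ is exactly $X_vY_v$, since ${H_v}^c$ absorbs; and ${H_v}^c$-parts multiply compatibly. I would check these first and verify bijectivity by writing down the inverse in the same form, swapping the roles of $H$ and $M$. Here $M_v=\tilde H$ and $M_v^c={H_v}^c$ need verification, which should follow from ${H_v}^c\mathsf{q}(H_v)={H_v}^c$.
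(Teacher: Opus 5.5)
Your ``only if'' direction is correct and matches the paper. It uses the same $\tilde H=(H_N\cap H_v)\sqcup {H_R}^{-1}$, and your checks that $\tilde H$ is a reduced valuation monoid with $\mathsf{q}(\tilde H)=\mathsf{q}(H_v)$ are fine. The gap is in the ``if'' direction for (2): you never produce an isomorphism, and the candidate you write down is not one. The map $X\mapsto(\max X)X^{-1}$ is the reversion \emph{automorphism} of $\Pf(H_v)$. It lands in $\Pf(H_v)$, not $\Pf(\tilde H)$, so it is not what \cite[Theorem 2]{Ra25} provides. Moreover, $F(X)=\Phi(X_v)\cup(\max X_v)X^c$ is not multiplicative. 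Take $u\in H_v\setminus\{1_H\}$, $w\in{H_v}^c$, $X=\{1_H,u\}$ and $Y=\{1_H,w\}$. Then $F(X)F(Y)=\{1_H,u,w,uw\}$, but $F(XY)=\{1_H,u,uw,u^2w\}$. More generally, ${H_v}^c$ absorbs $H_v$, so the cross terms $X_vY^c$ force any multiplicative $F$ of the shape $\Phi(X_v)\cup t_XX^c$ to have $\Phi$ a translation. The same factor must also be used on $X^c$. Your unresolved exponent $\epsilon$ is exactly where the plan breaks.

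The missing idea is that the whole set is simply translated by one element. For $X\in\Pf(H)$ there is a unique $a\in\mathsf{q}(H)$ with $aX\in\Pf(K)$. Existence comes from \cite[Lemma 1]{Ra25} applied to $X\cap H_v$: take $a=c^{-1}$, where $c$ is the minimum of $X\cap H_v$ in the total order that $\tilde H$ induces on $\mathsf{q}(H_v)$. Then $a(X\cap{H_v}^c)\subseteq{H_v}^c\mathsf{q}(H_v)={H_v}^c$ by Lemma \ref{pseudo}, so $aX\subseteq K$. Uniqueness holds because $aX,cX\in\Pf(K)$ give $a^{-1},c^{-1}\in X$, hence $ac^{-1},ca^{-1}\in K$, and $K$ is reduced. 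Then $F(X):=aX$ is bijective, with inverse built symmetrically starting from $Y\cap\tilde H$. It is multiplicative because $a_Xa_YXY\in\Pf(K)$, and uniqueness gives $a_{XY}=a_Xa_Y$. Your side remark is also too quick: ${H_v}^c\subseteq M_v^c$ does not follow from ${H_v}^c\mathsf{q}(H_v)={H_v}^c$ alone. It needs that $H_v$ is a valuation monoid on $\mathsf{q}(\tilde H)$. With the translation argument, though, this verification is not needed at all.
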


\begin{proof}
    Let $f:\Pf(H)\to\Pf(K)$ be an isomorphism with pullback $g$. By Theorem \ref{nonreducedpositive}, $H$ and $K$ are isomorphic whenever $H$ or $K$ contains a non-trivial unit. If, on the other hand, both $H$ and $K$ are reduced, we infer by Proposition \ref{decomposition}, that $K\simeq H_N\sqcup {H_R}^{-1}$. Clearly, $H_N\sqcup {H_R}^{-1}$ is a submonoid of $\mathsf{q}(H)$. Since $H_R\subseteq H_v$ by Lemma \ref{split monoids} and $H=H_N\sqcup H_R={H_v}^c\sqcup H_v$ by definition, it also follows that ${H_v}^c\subseteq H_N$. We then write \[H_N\sqcup {H_R}^{-1}={H_v}^c\sqcup \tilde{H},\] where $\tilde{H}:=((H_N\cap H_v)\sqcup {H_R}^{-1})$ and we aim to show that $\tilde{H}$ is a reduced valuation monoid with quotient group equal to $\mathsf{q}(H_v)$. Since, by Lemma \ref{pseudo}, $H_v=((H_N\cap H_v)\sqcup {H_R})$ is a reduced valuation monoid, this follows easily, once we prove that $\tilde{H}$ is a monoid. 
    Note that both $H_N\cap H_v$ and $H_R^{-1}$ are semigroups by Lemmas \ref{pseudo} and \ref{split monoids}, whence it remains to show that $ab^{-1}\in \tilde{H}$, whenever $H_N\cap H_v\neq\emptyset$, $H_R\neq\{1_H\}$, $a\in H_N\cap H_v$ and $b\in H_R\setminus\{1_H\}$. Suppose the contrary. Then, by Proposition \ref{decomposition}, $H_N\sqcup {H_R}^{-1}={H_v}^c\sqcup \tilde{H}$ is a monoid, which implies that $ab^{-1}\in {H_v}^c\subseteq H$. However, since $H_v$ is a valuation monoid and $a,b\in H_v$, we either have $ab^{-1}\in H_v$ or $ba^{-1}\in H_v$, both being impossible since $H$ is reduced. Hence $\tilde{H}$ is a monoid, as desired. \\

    If conversely, statement (1) holds and $H\simeq K$, then it follows trivially that $\Pf(H)\simeq\Pf(K)$.\\
    
    Suppose therefore that statement (2) holds. Since $\Pf(H)\simeq \Pf(K)$ is equivalent to $\Pf(H)\simeq \Pf({H_v}^c\sqcup \tilde{H})$, we may replace $K$ with an isomorphic copy of itself and assume without loss of generality that $K={H_v}^c\sqcup \tilde{H}$. If ${H_v}^c=\emptyset$, then both $H$ and $K$ are reduced valuation monoids with equal quotient groups and it follows from \cite[Theorem 2]{Ra25} that $\Pf(H)\simeq \Pf(K)$. If, on the other hand $\tilde{H}=\{1_K\}$, then $H_v=\{1_H\}$ and $H=K$. In particular, $\Pf(H)\simeq\Pf(K)$ and we can therefore assume that ${H_v}^c$ is nonempty and both $H_v$ and $\tilde{H}$ are non-trivial. Let $a\in {H_v}^c$ and suppose towards a contradiction that $a\in K_v$. 
    By definition of ${H_v}^c$, we find $b\in{H_v}^c$ such that neither $ab^{-1}\in H$ nor $ba^{-1}\in H$. Since $a\in K_v$ and $b\in K$, we have either $ab^{-1}\in K$ or $ba^{-1}\in K$. If $ab^{-1}\in K$, then clearly $ab^{-1}\not\in{H_v}^c\subseteq H$, whence $ab^{-1}\in \tilde{H}$. However, since $H_v$ is a valuation monoid with $\mathsf{q}(H_v)=\mathsf{q}(\tilde{H})$, we obtain $ab^{-1}\in H_v$ or $ba^{-1}\in H_v$, a contradiction. By a symmetric argument, $ba^{-1}\in K$ then also leads to a contradiction. Hence ${H_v}^c\subseteq {K_v}^c$. \\

    Let now $a\in \tilde{H}$. Since $\tilde{H}$ is a valuation monoid, we have $ab^{-1}\in K$ or $ba^{-1}\in K$ for all $b\in \tilde{H}$. Moreover, if $b\in {H_v}^c$, then due to $a^{-1}\in \mathsf{q}(H_v)$ and Lemma \ref{pseudo}, we obtain $ba^{-1}\in {H_v}^c\subseteq K$. Hence $\tilde{H}\subseteq K_v$ and since ${H_v}^c\subseteq {K_v}^c,$ we conclude that $K_v=\tilde{H}$ and ${K_v}^c={H_v}^c$.
    We now prove a claim that is a slight variation of \cite[Lemma 1]{Ra25}. \\

    \noindent \textbf{Claim:} Let $X\in \Pf(H)$ and $Y\in\Pf(K)$. Then there are unique elements $a,b\in \mathsf{q}(H)=\mathsf{q}(K)$ such that  $aX\in\Pf(K)$ and $bY\in\Pf(H)$.\\

    \noindent \textit{Proof of Claim:} We start by showing that there is at most one element $a\in\mathsf{q}(H)$ such that $aX\in\Pf(K)$. Suppose the contrary and let $a,c\in\mathsf{q}(H)$ be distinct elements such that $aX,cX\in\Pf(K)$. Since $1_K\in aX$ and $1_K\in cX$, we obtain $a^{-1},c^{-1}\in X$. Then $ac^{-1}\in aX\subseteq K$ and $ca^{-1}\in cX\subseteq K$, contradicting the fact that $K$ is reduced. To prove the existence, we consider $X\cap H_v\in\Pf(H_v)$. Since both $H_v$ and $K_v$ are reduced valuation monoids with equal quotient groups, by \cite[Lemma 1]{Ra25}, we find a unique $a\in \mathsf{q}(H_v)$ such that $a(X\cap H_v)\in\Pf(K_v)$. It then follows from Lemma \ref{pseudo}, that $a(X\cap {H_v}^c)\subseteq {H_v}^c$ and consequently, that $aX\subseteq K$. The other statement is then proved analogously by swapping the roles of $H_v$ and $K_v$, which finishes the proof of the claim.\\

   The rest of the proof follows the same steps as the proof of \cite[Theorem 2]{Ra25}. We define a map $f:\Pf(H)\to\Pf(K)$ via $f(X)=aX$, where $a\in\mathsf{q}(H)$ such that $aX\in\Pf(K)$. By the previous claim, $f$ is both well-defined and bijective, with $f^{-1}$ defined analogously. Moreover, if $X,Y\in\Pf(H)$, and $f(X)=aX, f(Y)=bY,f(XY)=cXY$ for some $a,b,c\in\mathsf{q}(H)$, then $f(X)f(Y)=abXY\in\Pf(K)$. By the uniqueness of the element $c$, we obtain $ab=c$ and in conclusion, $f(XY)=f(X)f(Y)$, proving that $f$ is an isomorphism between $\Pf(H)$ and $\Pf(K)$.

\end{proof}

To conclude this paper, we present an example of two non-isomorphic monoids, which are not valuation monoids, with isomorphic reduced finitary power monoids. We use a similar approach as in \cite[Corollary 3]{Ra25}, where such an example is given for reduced valuation monoids. Note that the monoids $(\Z\times\N)\cup(\N_0\times\{0\})$ and $\{(x,y)\in\Z^2:y\leq\alpha x\},$ for some $\alpha\in \mathbb{R}\setminus\mathbb{Q}$ are both reduced valuation submonoids of $(\Z^2,+)$. Moreover, the element $(1,0)$ is irreducible in $(\Z\times\N)\cup(\N_0\times\{0\})$, while by item B4 of \cite[Theorem 10]{Le22}, $\{(x,y)\in\Z^2:y\leq\alpha x\}$ does not contain any irreducible elements. 

\begin{example}
    Let $G$ be the free abelian group of rank 4 with generators $a,b,c,d$ and let $\tilde{G}$ be the subgroup of $G$, generated by $a$ and $b$. Since $\tilde{G}\simeq (\Z^2,+)$, we can define $\tilde{H}$ to be a reduced valuation submonoid of $\tilde{G}$, isomorphic to \[(\Z\times\N)\cup(\N_0\times\{0\})\] and, similarly, $\tilde{K}$ to be a reduced valuation submonoid of $\tilde{G}$, isomorphic to \[\{(x,y)\in\Z^2:y\leq\alpha x\},\] where $\alpha\in \mathbb{R}\setminus\mathbb{Q}$. We now let $M$ denote the subsemigroup of $G$, generated by $c$ and $d$, and define the reduced monoids \[H:=\tilde{H}\sqcup  (\tilde{G}\cdot M) \] and \[K:=\tilde{K} \sqcup (\tilde{G}\cdot M).\] We claim that $H_v=\tilde{H}$. Indeed, by construction, we have $\tilde{H}\subseteq H_v$. Moreover, since every element in $H$ can be expressed uniquely as a reduced word in the symbols $a,b,c,d$ and no such word contains a negative power of $c$ or $d$, we conclude that $\tilde{G}\cdot M\subseteq {H_v}^c$.
    In a similar fashion, we obtain $K_v=\tilde{K}$, which implies that \[K={H_v}^c\sqcup \tilde{K}.\] Then, due to the fact that $\tilde{H}$ and $\tilde{K}$ are reduced valuation monoids with equal quotient groups, we can apply Theorem \ref{main} to conclude that $\Pf(H)\simeq\Pf(K)$. However, $\tilde{H}$ contains an irreducible element, which is also irreducible in $H$, whereas $K$ does not contain any irreducible elements, from which we infer that $H$ and $K$ are not isomorphic. In conclusion, we have constructed two non-isomorphic non-valuation monoids, with isomorphic reduced finitary power monoids.
\end{example}


\begin{thebibliography}{10}

    \bibitem{An-Tr21}
    A. Antoniou and S. Tringali, \emph{On the arithmetic of power monoids and sumsets in cyclic groups}, Pacific J. Math. \textbf{312} (2021), 279--308      
    
    \bibitem{Bi-Ge25}
    P.-Y. Bienvenu and A. Geroldinger, \emph{On algebraic properties of power monoids of numerical monoids}, Isr. J. Math. \textbf{265} (2025), 867--900.
    
    \bibitem{Co-Tr25}
    L. Cossu and S. Tringali, \emph{On the arithmetic of power monoids}, J. Algebra \textbf{686} (2026), 793--813.
    
    \bibitem{Fa-Tr18}
    Y. Fan and S. Tringali, \emph{Power monoids: A bridge between Factorization Theory and Arithmetic Combinatorics}, J. Algebra \textbf{512} (2018), 252--294.
    
    \bibitem{Ga-Zh14}
    A. Gan and X. Zhao, \emph{Global determinism of Clifford semigroups}, J. Aust. Math. Soc. \textbf{97} (2014), 63--77.
    
    \bibitem{Go-Is84}
    M. Gould and J. A. Iskra, \emph{Globally determined classes of semigroups}, Semigroup Forum \textbf{28} (1984), 1-11.

    \bibitem{Le22}
    G. Lettl, \emph{Atoms of root-closed submonoids of $\Z^2$}, Semigroup Forum \textbf{105} (2022), 282--294.

    
    \bibitem{Mo73}
    E.M. Mogiljanskaja, \emph{Non-isomorphic semigroups with isomorphic semigroups of subsets}, Semigroup Forum \textbf{6} (1973), 330--333.

    \bibitem{Os68}
    H.-H. Ostmann, \emph{Additive Zahlentheorie, 1. Teil: Allgemeine Untersuchungen}, Springer-Verlag, 1968. 

    \bibitem{Ra25}
    B. Rago, \emph{A counterexample to an isomorphism problem for power monoids}, Proc. Amer. Math. Soc., to appear.

    \bibitem{2Ra25}
    B. Rago, \emph{The automorphism group of reduced power monoids of finite abelian groups}, 

    {https://arxiv.org/abs/2510.17533}.

    \bibitem{Re26a}
A.~Reinhart, \emph{On the system of length sets of power monoids},
  {https://arxiv.org/abs/2508.10209}.

    \bibitem{Sa12}
    A. Sárközy, \emph{On additive decompositions of the set of quadratic residues modulo} $p$, Acta Arith. \textbf{155} (2012), No. 1, 41-51.
    
    \bibitem{Sh67}
    J. Shafer, \emph{Note on power semigroups}, Math. Japon. \textbf{12} (1967), 32.
    
    \bibitem{Sh-Ta67}
    J. Shafer and T. Tamura, \emph{Power semigroups}, Math. Japon. \textbf{12} (1967), 25--32.
    
    \bibitem{Tr22}
    S. Tringali, \emph{An abstract factorization theorem and some applications}, J. Algebra \textbf{602} (2022), 352--380
    
    \bibitem{Tr25}
    S. Tringali, ``On the isomorphism problem for power semigroups", in: M. Brešar, A. Geroldinger, B. Olberding, and D. Smertnig (eds.), \emph{Recent Progress in Ring and Factorization Theory}, Springer Proc. Math. Stat. \textbf{477}, Springer, 2025.

    \bibitem{Tr-We25}
    S. Tringali and K. Wen, \emph{The automorphism group of the finitary power monoid
    of the integers under addition}, https://arxiv.org/abs/2504.12566.

    \bibitem{Tr-Ya25}
    S. Tringali and W. Yan, \emph{On power monoids and their automorphisms}, J. Combin. Theory Ser. A \textbf{209} (2025), 105961, 16pp.
    \bibitem{2Tr-Ya25}
    S. Tringali and W. Yan, \emph{A conjecture by Bienvenu and Geroldinger on power monoids}, Proc. Amer. Math. Soc. \textbf{153} (2025), 913--919.

    \bibitem{3Tr-Ya25}
    S. Tringali and W. Yan, \emph{Torsion groups and the Bienvenu-Geroldinger Conjecture}, https://arxiv.org/abs/2601.19592.
     
      
      \end{thebibliography}
\end{document}